\documentclass[preprint,3p,11pt,authoryear]{elsarticle}

\usepackage{amssymb,amsthm,mathtools}
\usepackage[colorlinks]{hyperref}

\usepackage{enumerate,natbib,caption,graphicx,subcaption}
\mathtoolsset{showonlyrefs=true} 
\usepackage{geometry}
\geometry{top=1.5in,bottom=1.7in,left=1.3in,right=1.3in}

\usepackage[title]{appendix} 
\usepackage{mathrsfs} 
\usepackage{dsfont} 
\usepackage{float} 

\newtheorem{theorem}{Theorem}
\newtheorem{corollary}{Corollary}
\newtheorem{lemma}{Lemma}
\newtheorem{remark}{Remark}
\newtheorem*{notations}{Notations}

\newcommand{\N}{\mathbb{N}}
\newcommand{\R}{\mathbb{R}}

\newcommand{\QQ}{\mathbb{Q}}
\newcommand{\PP}{\mathbb{P}}
\newcommand{\EE}{\mathbb{E}}

\newcommand{\OO}{\mathcal{O}}
\newcommand{\oo}{\mathrm{o}}
\newcommand{\leqdef}{\vcentcolon=}
\newcommand{\reqdef}{=\vcentcolon}
\newcommand{\rd}{{\rm d}}
\newcommand{\ind}{\mathds{1}}

\allowdisplaybreaks

\begin{document}

\begin{frontmatter}

    \title{A refined continuity correction for the negative binomial distribution and asymptotics of the median}%

    \author[a1,a2]{Fr\'ed\'eric Ouimet\texorpdfstring{}{)}}%

    \address[a1]{California Institute of Technology, Pasadena, USA.}%
    \address[a2]{Universit\'e de Montr\'eal, Montreal, QC H3T 1J4, Canada.}%

    \ead{frederic.ouimet@umontreal.ca}%

    \begin{abstract}
        In this paper, we prove a local limit theorem and a refined continuity correction for the negative binomial distribution.
        We present two applications of the results.
        First, we find the asymptotics of the median for a $\mathrm{Negative\hspace{0.5mm}Binomial}\hspace{0.2mm}(r,p)$ random variable jittered by a $\mathrm{Uniform}\hspace{0.2mm}(0,1)$, which answers a problem left open in \cite{MR4135709}. This is used to construct a simple, robust and consistent estimator of the parameter~$p$, when $r > 0$ is known. The case where $r$ is unknown is also briefly covered.
        Second, we find an upper bound on the Le Cam distance between negative binomial and normal experiments.
    \end{abstract}

    \begin{keyword}
        local limit theorem \sep continuity correction \sep quantile coupling \sep negative binomial distribution \sep Gaussian approximation \sep median \sep comparison of experiments, Le Cam distance, total variation
        \MSC[2020]{Primary: 62E20 Secondary: 62F12, 62F35, 62E15, 60F15, 62B15}
    \end{keyword}

\end{frontmatter}

\section{Introduction}\label{sec:intro}

    For any $r > 0$ and $p\in (0,1)$, the $\mathrm{Neg\hspace{0.3mm}Bin}\hspace{0.2mm}(r, p)$ probability mass function is defined by
    \vspace{-1mm}
    \begin{equation}\label{eq:negative.binomial.pmf}
        P_{r,p}(k) = \frac{\Gamma(r + k)}{\Gamma(r) \, k!} \, q^r p^k, \quad k\in \N_0,
    \end{equation}
    where $q\leqdef 1 - p$.
    If $r\in \N$, then a $\mathrm{Neg\hspace{0.3mm}Bin}\hspace{0.2mm}(r, p)$ random variable here is interpreted as the number of successes it takes to observe the $r$-th failure in an infinite sequence of independent trials.
    The first objective of our paper is to derive, using only elementary methods, a local asymptotic expansion for \eqref{eq:negative.binomial.pmf} in terms of the Gaussian density with the same mean and variance evaluated at $k$, namely:
    \begin{equation}\label{eq:phi.M}
        \frac{q}{\sqrt{r p}} \phi(\delta_k), \quad \text{where } \phi(z) \leqdef \frac{e^{-z^2/2}}{\sqrt{2\pi}} ~~ \text{and} ~~ \delta_k \leqdef \frac{k - r p q^{-1}}{\sqrt{r p q^{-2}}}.
    \end{equation}
    This is then used to prove a refined continuity correction for the negative binomial distribution in Section~\ref{sec:main.results}.
    For a general presentation on local limit theorems, we refer the reader to \cite{MR1295242}.

    In Section~\ref{sec:applications}, two applications of the local limit theorem and continuity correction are presented.
    Specifically, in Section~\ref{sec:median.jittered}, we find the asymptotics of the median for a $\mathrm{Neg\hspace{0.3mm}Bin}\hspace{0.2mm}(r,p)$ random variable jittered by a $\mathrm{Uniform}\hspace{0.2mm}(0,1)$, which answers a problem left open in Section 4 of \cite{MR4135709}. This result is used to construct a robust estimator of the parameter $p$ for the negative binomial distribution, which we briefly compare with the maximum likelihood estimator. The method of proof is completely new and might be of independent interest.
    To demonstrate its versatility, a slightly weaker version of the main result in \cite{MR4135709} is rederived with the new method in Appendix~\ref{sec:jittered.Poisson.median}. In Section~\ref{sec:LeCam.upper.bound}, we find an upper bound on the Le Cam distance between negative binomial and normal experiments. Some details on related works are presented in each subsection.

    \begin{notations}
        Throughout the paper, the notation $u = \OO(v)$ means that $\limsup |u / v| < C$, as $r\to \infty$, where $C > 0$ is a universal constant.
        Whenever $C$ might depend on some parameters, we add subscripts (for example, $u = \OO_p(v)$).
        Similarly, $u = \oo(v)$ means that $\lim |u / v| = 0$ as $r\to \infty$, and subscripts indicate which parameters the convergence rate can depend on.
        The notation $u \asymp v$ means that $u = \OO(v)$ and $v = \OO(u)$.
    \end{notations}

\section{A refined continuity correction}\label{sec:main.results}

    First, we need local approximations for the probability mass function of the negative binomial distribution with respect to the normal density function with the same mean and variance.

    \begin{lemma}[Local limit theorem]\label{lem:LLT.negative.binomial}
        For any $r > 0$ and $p,\eta\in (0,1)$, let
        \begin{equation}\label{eq:bulk}
            B_{r,p}(\eta) \leqdef \bigg\{k\in \N_0 : \Big|\frac{\delta_k}{\sqrt{r p}}\Big| \leq \eta \, r^{-1/3}\bigg\}
        \end{equation}
        denote the bulk of the negative binomial distribution.
        Then, as $r\to \infty$ and uniformly for $k\in B_{r,p}(\eta)$, we have
        \begin{align}\label{eq:lem:LLT.negative.binomial.log.eq}
            \log\bigg(\frac{P_{r,p}(k)}{\frac{q}{\sqrt{r p}} \phi(\delta_k)}\bigg)
            &= (r p)^{-1/2} \left\{\frac{1 + p}{6} \delta_k^3 - \frac{1 + p}{2} \delta_k\right\} \notag \\
            &\quad+ (r p)^{-1} \left\{- \frac{1 + p + p^2}{12} \delta_k^4 + \frac{p^2 + 1}{4} \delta_k^2 - \frac{p^2 + q}{12}\right\} \notag \\
            &\quad+ \OO_p\bigg(\frac{1 + |\delta_k|^5}{r^{3/2} \eta^4}\bigg),
        \end{align}
        and
        \begin{align}\label{eq:lem:LLT.negative.binomial.eq}
            \frac{P_{r,p}(k)}{\frac{q}{\sqrt{r p}} \phi(\delta_k)} = 1
            &+ (r p)^{-1/2} \left\{\frac{1 + p}{6} \delta_k^3 - \frac{1 + p}{2} \delta_k\right\} \notag \\
            &+ (r p)^{-1} \left\{\frac{(1 + p)^2}{72} \delta_k^6 - \frac{2 + 3 p + 2 p^2}{12} \delta_k^4 + \frac{3 + 2 p + 3 p^2}{8} \delta_k^2 - \frac{p^2 + q}{12}\right\} \notag \\
            &+ \OO_p\bigg(\frac{1 + |\delta_k|^9}{r^{3/2} \eta^4}\bigg).
        \end{align}
    \end{lemma}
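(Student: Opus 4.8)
The plan is to obtain the expansion directly from Stirling's series applied to the Gamma functions in $P_{r,p}(k)$, then Taylor-expand the resulting logarithm as a function of the centered/rescaled variable $\delta_k$. Concretely, write $k = rpq^{-1} + \sqrt{rpq^{-2}}\,\delta_k$, so that on the bulk $B_{r,p}(\eta)$ we have $|\delta_k| = \OO(\sqrt{rp}\,r^{-1/3})$ and hence $\delta_k / \sqrt{rp} = \OO(r^{-1/3})$, which is the small quantity governing the expansion. Using the pmf \eqref{eq:negative.binomial.pmf}, one has
\begin{equation*}
    \log\!\bigg(\frac{P_{r,p}(k)}{\frac{q}{\sqrt{rp}}\phi(\delta_k)}\bigg)
    = \log\Gamma(r+k) - \log\Gamma(r) - \log\Gamma(k+1) + r\log q + k\log p - \log\!\Big(\tfrac{q}{\sqrt{rp}}\phi(\delta_k)\Big).
\end{equation*}
I would then substitute the Stirling expansion $\log\Gamma(x) = (x-\tfrac12)\log x - x + \tfrac12\log(2\pi) + \tfrac{1}{12x} + \OO(x^{-3})$ for each of $\Gamma(r+k)$ and $\Gamma(k+1)$ (the $\Gamma(r)$ term is exact and does not depend on $k$), being careful that the error terms must be uniform in $k$ over the bulk — this is legitimate because on $B_{r,p}(\eta)$ both $r+k$ and $k+1$ are of order $r$ (more precisely $\asymp_p r$), so the $\OO(x^{-3})$ Stirling remainders contribute $\OO_p(r^{-3})$, well inside the claimed error.

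The second step is the bookkeeping: after substitution, collect all terms and expand each logarithm of the form $\log(1 + a\delta_k/\sqrt{rp} + b\delta_k^2/(rp) + \cdots)$ using $\log(1+x) = x - x^2/2 + x^3/3 - \cdots$, retaining every contribution up to order $(rp)^{-1}$ and tracking the order-$(rp)^{-3/2}$ remainder. A convenient organizing device is to write $\log(r+k) = \log(rpq^{-1}) + \log(1 + q\,\delta_k/\sqrt{rp})$ and $\log(k) = \log(rpq^{-1}) + \log(1 + q\,\delta_k/(p\sqrt{rp}))$ (with a further small correction for the shift $k \mapsto k+1$), so that everything becomes a polynomial in $\delta_k$ times powers of $(rp)^{-1/2}$; the $\phi(\delta_k)$ term on the right-hand side contributes exactly $-\delta_k^2/2 - \tfrac12\log(2\pi)$, which cancels the leading quadratic piece coming from the Gamma expansions and is the reason the normal approximation works at all. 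The constant terms must cancel down to the stated ``$1$'' plus the $-\tfrac{p^2+q}{12}(rp)^{-1}$ correction, the odd-degree-in-$\delta_k$ pieces assemble into the $(rp)^{-1/2}$ bracket, and the even-degree pieces into the $\delta_k^4$, $\delta_k^2$ part of the $(rp)^{-1}$ bracket. This yields \eqref{eq:lem:LLT.negative.binomial.log.eq}. For \eqref{eq:lem:LLT.negative.binomial.eq} one simply exponentiates, using $e^{x} = 1 + x + x^2/2 + \OO(x^3)$ with $x$ equal to the right-hand side of \eqref{eq:lem:LLT.negative.binomial.log.eq} minus $1$; the square of the $(rp)^{-1/2}$ bracket produces the $\delta_k^6$ term $\tfrac{(1+p)^2}{72}\delta_k^6$ and modifies the $\delta_k^4$ and $\delta_k^2$ coefficients, and the cubic remainder in the exponential, being $\OO_p((|\delta_k|^3/\sqrt{rp})^3) = \OO_p(|\delta_k|^9 r^{-3/2})$, accounts for the larger error exponent $|\delta_k|^9$ in \eqref{eq:lem:LLT.negative.binomial.eq}.

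The main obstacle is purely the uniformity and precision of the error control: one must verify that every discarded term is genuinely $\OO_p\big((1+|\delta_k|^5)r^{-3/2}\eta^{-4}\big)$ (resp.\ with $|\delta_k|^9$), uniformly over the whole bulk rather than pointwise. The $\eta^{-4}$ factor is the telltale sign of where the difficulty concentrates: near the edge of the bulk $|\delta_k| \approx \eta\sqrt{rp}\,r^{-1/3}$, the ``small'' parameter $\delta_k/\sqrt{rp}$ is as large as $\eta r^{-1/3}$, so a term like $\delta_k^6/(rp)^{3/2}$ — nominally of order $(rp)^{-3/2}$ times $|\delta_k|^6$ — must be rebudgeted as $\OO_p(\eta^{-4} r^{-3/2}|\delta_k|^{10})$ or similar by trading powers of $\delta_k$ against the bound $|\delta_k|^2 \le \eta^2 (rp)\,r^{-2/3}$; keeping the exponents consistent requires care. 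I would handle this by first isolating the exact polynomial-in-$\delta_k$ part up to the $(rp)^{-1}$ level symbolically, then bounding the tail of each Taylor/Stirling series by its first omitted term using the bulk constraint, and finally checking that the worst such term matches the stated remainder. Everything else is routine (if lengthy) algebra with the series $\log(1+x)$, $e^x$, and Stirling's expansion.
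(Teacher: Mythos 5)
Your approach is essentially the same as the paper's: write the log of the ratio via Stirling's formula for $\log\Gamma(r+k)$ and $\log k!$, substitute for $k$ and $r+k$ in terms of $\delta_k$, Taylor-expand $\log(1+y)$-type terms uniformly on the bulk, collect by powers of $(rp)^{-1/2}$, and finally exponentiate with $e^y = 1 + y + y^2/2 + \OO(y^3)$ to get \eqref{eq:lem:LLT.negative.binomial.eq} from \eqref{eq:lem:LLT.negative.binomial.log.eq}, with the $\delta_k^6$ term coming from the square of the $(rp)^{-1/2}$ bracket and the $\eta^{-4}$ arising from the bulk bound $|\delta_k| \le \eta\sqrt{rp}\,r^{-1/3}$.

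One bookkeeping slip worth flagging: your ``organizing device'' identities are off. Since $\mathbb E[K] = rpq^{-1}$ and $\mathrm{Var}(K) = rpq^{-2}$, the correct substitutions (as in the paper) are $r + k = \frac{r}{q}\big(1 + \frac{\delta_k}{\sqrt{r p^{-1}}}\big)$ and $k = \frac{rp}{q}\big(1 + \frac{\delta_k}{\sqrt{rp}}\big)$; your version uses $rpq^{-1}$ as the centering constant for $r+k$ (it should be $rq^{-1}$) and puts spurious factors of $q/p$ into the small parameter, which would propagate to the wrong coefficients in the brackets if carried through. Also, the ``$=1+\cdots$'' on the right of \eqref{eq:lem:LLT.negative.binomial.log.eq} is a typo in the paper (the derivation gives $0$, not $1$, as the leading constant for the log), so do not try to make your constants ``cancel to $1$''; they should cancel to $0$ there, and the $1$ appears only after exponentiating in \eqref{eq:lem:LLT.negative.binomial.eq}.
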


    \begin{proof}
        The proof is differed to Appendix~\ref{sec:refined.continuity.correction}.
    \end{proof}

    \begin{remark}
        An expansion akin to \eqref{eq:lem:LLT.negative.binomial.eq} can be found in Result 4.3.3 of \cite{MR207011}. His paper states that, uniformly in $k$,
        \begin{equation}\label{eq:LLT.neg.bin.more.precise.1}
            \begin{aligned}
                P_{r,p}(k)
                &= \frac{q}{\sqrt{r p}} \phi(\delta_k) - \frac{q (1 + p)}{6 r p} \phi^{(3)}(\delta_k) \\[-1mm]
                &~+ \frac{q}{24 (r p)^{3/2}} \left\{(1 + 4 p + p^2) \phi^{(4)}(\delta_k) + \frac{(1 + p)^2}{3} \phi^{(6)}(\delta_k)\right\} \\[1mm]
                &~- \frac{q (1 + p)}{24 (r p)^2} \left\{\frac{1 + 10 p + p^2}{5} \phi^{(5)}(\delta_k) + \frac{1 + 4 p + p^2}{6} \phi^{(7)}(\delta_k) + \frac{(1 + p)^2}{54} \phi^{(9)}(\delta_k)\right\} \\[1mm]
                &~+ \oo_p\left((r p)^{-2}\right),
            \end{aligned}
        \end{equation}
        where $\phi^{(n)}(x) = (-1)^n \mathrm{He}_n(x) \phi(x)$, $\phi$ denotes the density function of the standard normal distribution, and $\mathrm{He}_n$ is the $n$-th probabilists' Hermite polynomial.
        The error term $\oo_p\left((r p)^{-2}\right)$ in \eqref{eq:LLT.neg.bin.more.precise.1} has the following form (see Theorem 5 in \cite{MR14626} or Result 3.8 in \cite{MR207011} for details):
        For any fixed $s \geq 6$,
        \begin{equation}\label{eq:LLT.neg.bin.more.precise.1.next.error}
            \oo_p\left((r p)^{-2}\right) = \frac{q}{\sqrt{r p}} \phi(\delta_k) \sum_{j=4}^{s-2} \frac{\text{Polynomial of order $3j$ in $\delta_k$}}{(r p)^{j/2}} + \oo_p((r p)^{-(s-1)/2}).
        \end{equation}
        If we try to divide both sides of \eqref{eq:LLT.neg.bin.more.precise.1} by $q \phi(\delta_k) / \sqrt{r p}$ in order to get an approximation for the ratio $P_{r,p}(k) / (q \phi(\delta_k) / \sqrt{r p})$, then the lingering error term on the right-hand side of \eqref{eq:LLT.neg.bin.more.precise.1.next.error} will become $\oo_p((r p)^{-(s-2)/2} e^{\delta_k^2 / 2})$, which gives a very poor control when $\delta_k$ is large.
        In fact, if we were to use this estimate for our application in Section~\ref{sec:LeCam.upper.bound} (look at Equations~\eqref{eq:I.plus.II.plus.III} and \eqref{eq:estimate.I.begin} in the proof of Theorem~\ref{thm:prelim.Carter}), we would need to control the error $\oo_p((r p)^{-(s-2)/2} \, \EE[e^{\delta_K^2 / 2}])$ as $r\to \infty$, where $K\sim \mathrm{Neg\hspace{0.3mm}Bin}\hspace{0.2mm}(r, p)$. But clearly, this cannot work since $\delta_K$ is roughly a standard normal when $r$ is large, say $Z$, and we have $\EE[e^{Z^2 / 2}] = \infty$.

        In that sense, the result of Lemma~\ref{lem:LLT.negative.binomial} gives a refined control of the last error term in the expansion of the ratio $P_{r,p}(k) / (q \phi(\delta_k) / \sqrt{r p})$ compared to \cite{MR207011}. In turn, the continuity correction based on that refined local limit theorem will be called a refined continuity correction, an expression originally coined by \cite{MR538319} when he proved similar results in the context of the binomial distribution.

        We should also mention that in contrast with Govindarajulu, our proof is elementary and self-contained; we only use Stirling's formula and Taylor expansions. Result 4.3.3 in \cite{MR207011} relies on difficult estimates of Fourier analysis found in \cite{MR14626}, which itself relies on results from \cite{doi:10.1080/03461238.1928.10416862,Cramer_1937_book}. This makes the result of Lemma~\ref{lem:LLT.negative.binomial} and its consequences in Section~\ref{sec:applications} more accessible and open to scrutiny.
    \end{remark}

    By summing up the local approximations in Lemma~\ref{lem:LLT.negative.binomial}, we can prove a (refined) continuity correction for the negative binomial distribution.
    An analogous result was proved for the binomial distribution in Theorem 2 of \cite{MR538319}.

    \begin{theorem}[Refined continuity correction]\label{thm:main.result}
        For any $r > 0$ and $p,\eta\in (0,1)$, recall the definition of $B_{r,p}(\eta)$ from \eqref{eq:bulk}.
        Then, as $r\to \infty$ and uniformly for $a\in B_{r,p}(\eta)$, we have
        \vspace{-1mm}
        \begin{align}
            &\sum_{k=a}^{\infty} P_{r,p}(k) = \Psi(\delta_{a - c_{r,p}^{\star}(a)}) + \OO_p(r^{-3/2}), \label{eq:thm:main.result.eq.1} \\
            &\sum_{k=0}^a P_{r,p}(k) = \Phi(\delta_{a + 1 - c_{r,p}^{\star}(a + 1)}) + \OO_p(r^{-3/2}), \label{eq:thm:main.result.eq.2.reflect}
        \end{align}
        where $\Psi = 1 - \Phi$ and $\Phi$ denotes the cumulative distribution function of the standard normal distribution, and the continuity correction is given by
        \begin{equation}\label{eq:optimal.choice.c}
            \begin{aligned}
                c_{r,p}^{\star}(a)
                &\leqdef \frac{1}{2} + \frac{1+p}{6 q} \big[\delta_{a - \frac{1}{2}}^2 - 1\big] \\
                &\qquad+ \frac{1}{q \sqrt{r p}}\left\{- \frac{1}{72} \big[5 + 16 p + 17 p^2\big] \delta_{a - \frac{1}{2}}^3 + \frac{1}{36} \big[1 - 4 p - 2 p^2\big] \delta_{a - \frac{1}{2}}\right\}.
            \end{aligned}
        \end{equation}
    \end{theorem}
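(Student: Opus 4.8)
The plan is to expand each summand with the local limit theorem (Lemma~\ref{lem:LLT.negative.binomial}), replace the resulting sums by Gaussian integrals via the midpoint Euler--Maclaurin formula, evaluate those integrals in closed form, and then verify that the definition of $c_{r,p}^{\star}$ is precisely what makes the answer collapse to $\Psi(\delta_{a-c_{r,p}^{\star}(a)})$ up to $\OO_p(r^{-3/2})$; this parallels the proof of Theorem~2 in \cite{MR538319} for the binomial case. It is enough to treat $m$ at least as large as the right endpoint $k_{+}$ of $B_{r,p}(\eta)$ (the only range in which the stated identity is non-trivial): a Bernstein-type tail bound for the negative binomial gives $\sum_{k>k_{+}} P_{r,p}(k)=\OO_p(e^{-c\,r^{1/3}})$ for some $c=c(p,\eta)>0$, so $\sum_{k=a}^{m} P_{r,p}(k)$ and $\sum_{k=a}^{k_{+}} P_{r,p}(k)$ differ negligibly.

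On $B_{r,p}(\eta)$, Lemma~\ref{lem:LLT.negative.binomial} writes $P_{r,p}(k)=\tfrac{q}{\sqrt{rp}}\phi(\delta_{k})\bigl[1+(rp)^{-1/2}\alpha(\delta_{k})+(rp)^{-1}\beta(\delta_{k})\bigr]+\tfrac{q}{\sqrt{rp}}\phi(\delta_{k})\,\OO_p\!\bigl(\tfrac{1+|\delta_{k}|^{9}}{r^{3/2}}\bigr)$, where $\alpha$ and $\beta$ are the degree-$3$ and degree-$6$ polynomials inside the braces of \eqref{eq:lem:LLT.negative.binomial.eq}. Since $\tfrac{q}{\sqrt{rp}}\sum_{k}\phi(\delta_{k})(1+|\delta_{k}|^{9})$ is a Riemann sum for $\int_{\R}\phi(z)(1+|z|^{9})\,\rd z=\OO(1)$, summing the remainder contributes only $\OO_p(r^{-3/2})$, and the problem reduces to evaluating $\tfrac{q}{\sqrt{rp}}\sum_{k\ge a}\phi(\delta_{k})g(\delta_{k})$ for $g\in\{1,\alpha,\beta\}$ (with respective weights $1,(rp)^{-1/2},(rp)^{-1}$). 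The key structural fact is that $\delta_{k+1}-\delta_{k}=\tfrac{q}{\sqrt{rp}}$, so $\delta_{k}$ is the midpoint of $[\delta_{k-\frac12},\delta_{k+\frac12}]$ and each such sum is a midpoint quadrature of $\int_{\delta_{a-1/2}}^{\infty}\phi(z)g(z)\,\rd z$, the tail past $\delta_{k_{+}+1/2}$ being super-exponentially small.

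The midpoint Euler--Maclaurin expansion then gives, for $g\equiv1$, $\tfrac{q}{\sqrt{rp}}\sum_{k\ge a}\phi(\delta_{k})=\Psi(\delta_{a-1/2})+\tfrac{1}{24}\bigl(\tfrac{q}{\sqrt{rp}}\bigr)^{2}\phi'(\delta_{a-1/2})+\OO(r^{-2})$, whereas the boundary corrections attached to the $(rp)^{-1/2}$- and $(rp)^{-1}$-sums are of orders $\OO(r^{-3/2})$ and $\OO(r^{-2})$ and may be discarded. The integrals $\int_{\delta_{a-1/2}}^{\infty}z^{j}\phi(z)\,\rd z$ are evaluated by repeated integration by parts; in particular, because the total mass equals $1$ one has $\int_{\R}\phi(z)\beta(z)\,\rd z=0$, so $\int_{\delta_{a-1/2}}^{\infty}\phi(z)\beta(z)\,\rd z$ has no $\Psi(\delta_{a-1/2})$ component and equals $\phi(\delta_{a-1/2})$ times an explicit odd polynomial. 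Collecting everything,
\[
\sum_{k=a}^{m} P_{r,p}(k)=\Psi(\delta_{a-1/2})+(rp)^{-1/2}\phi(\delta_{a-1/2})\,\mathsf{P}_{1}(\delta_{a-1/2})+(rp)^{-1}\phi(\delta_{a-1/2})\,\mathsf{P}_{2}(\delta_{a-1/2})+\OO_p(r^{-3/2}),
\]
where $\mathsf{P}_{1}$ is produced by $\alpha$ and $\mathsf{P}_{2}$ by $\beta$ together with the Euler--Maclaurin boundary term $\propto\delta_{a-1/2}\phi(\delta_{a-1/2})$.

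It remains to reconcile this with $\Psi(\delta_{a-c_{r,p}^{\star}(a)})$. On the bulk $c_{r,p}^{\star}(a)=\tfrac12+\OO_p(r^{1/3})$, so $\Delta\leqdef\delta_{a-c_{r,p}^{\star}(a)}-\delta_{a-1/2}=\tfrac{q}{\sqrt{rp}}\bigl(\tfrac12-c_{r,p}^{\star}(a)\bigr)=\OO_p(r^{-1/6})$, with $(rp)^{-1/2}$- and $(rp)^{-1}$-pieces $\Delta=(rp)^{-1/2}\Delta_{1}+(rp)^{-1}\Delta_{2}$ (polynomials in $\delta_{a-1/2}$) read off from \eqref{eq:optimal.choice.c}. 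Taylor expansion yields $\Psi(\delta_{a-c_{r,p}^{\star}(a)})=\Psi(\delta_{a-1/2})-\phi(\delta_{a-1/2})\Delta+\tfrac12\delta_{a-1/2}\phi(\delta_{a-1/2})\Delta^{2}+\OO_p(r^{-3/2})$; the remainder is genuinely $\OO_p(r^{-3/2})$ because every $\Psi^{(n)}$ is $\phi$ times a polynomial, so although $|\Delta|$ may be of order $r^{-1/6}$ it is multiplied by a Gaussian factor that is correspondingly small, keeping $\phi(\delta_{a-1/2})\Delta^{n}\cdot(\text{polynomial})=\OO_p(r^{-n/2})$. Matching the $(rp)^{-1/2}$ and $(rp)^{-1}$ coefficients of this expansion against $\mathsf{P}_{1}$ and $\mathsf{P}_{2}$ gives two polynomial identities: the first fixes the term $\tfrac{1+p}{6q}[\delta_{a-1/2}^{2}-1]$ of $c_{r,p}^{\star}$, and the second---after the Euler--Maclaurin boundary contribution is folded in---fixes the cubic/linear term; verifying that these identities hold with the stated coefficients is the main computational obstacle. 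Finally, \eqref{eq:thm:main.result.eq.2.reflect} follows from \eqref{eq:thm:main.result.eq.1} with $m=\infty$ by complementation: $\sum_{k=0}^{a}P_{r,p}(k)=1-\sum_{k=a+1}^{\infty}P_{r,p}(k)=1-\Psi(\delta_{a+1-c_{r,p}^{\star}(a+1)})+\OO_p(r^{-3/2})=\Phi(\delta_{a+1-c_{r,p}^{\star}(a+1)})+\OO_p(r^{-3/2})$, using $\Phi=1-\Psi$ and, when $a+1\notin B_{r,p}(\eta)$, the super-polynomial decay of the right tail.
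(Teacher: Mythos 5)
Your approach is essentially the same as the paper's: both sum the local limit expansion from Lemma~\ref{lem:LLT.negative.binomial}, replace each term with the Gaussian integral over $[\delta_{k-\frac12}, \delta_{k+\frac12}]$ via midpoint quadrature (the paper's Taylor expansion of $\phi$ around $\delta_k$ over the symmetric cell), evaluate the resulting incomplete Gaussian moment integrals, and choose $c^{\star}_{r,p}$ so that the $(rp)^{-1/2}$ and $(rp)^{-1}$ corrections vanish. The one step you flag but leave unverified---matching your polynomial corrections against \eqref{eq:optimal.choice.c}---is precisely the paper's computation of $d^{\star}_{r,p}(a)$ in \eqref{eq:error.terms.brace.2.next} and the subsequent collection of coefficients, which is the substantive calculation of the proof but is indeed mechanical.
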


    \begin{proof}
        The proof is differed to Appendix~\ref{sec:refined.continuity.correction}.
    \end{proof}

\section{Applications}\label{sec:applications}

    In this section, we present two applications of Lemma~\ref{lem:LLT.negative.binomial} and Theorem~\ref{thm:main.result}.

    \subsection{Median of a jittered negative binomial random variable}\label{sec:median.jittered}

        Incremental improvements on the bounds for the median of the Poisson distribution and the related asymptotics of the median for the Gamma distribution have a long history in probability and statistics, see, e.g., \cite{MR67384,MR448635,MR858317,MR1195477,MR1333373,MR1997033,MR2230380,MR2126774,MR2173334,MR2245572,MR2275872,MR2386240,MR2379680,MR2555416,MR2904370,MR3696135,MR3597404,doi:10.1371/journal.pone.0251626,MR4202215}.
        The best account of the more important developments is given in Section 1 of \cite{MR2386240}.
        For instance, the difference between the median and the mean of the $\mathrm{Poisson}\hspace{0.2mm}(\lambda)$ distribution fluctuates, as $\lambda\to \infty$, between two sharp bounds. This is something that we know since \cite{MR1195477}, who proved the following conjecture originally made by \cite{MR858317}: if $N_{\lambda}\sim \mathrm{Poisson}\hspace{0.2mm}(\lambda)$, then
        \begin{equation}\label{eq:sharp.bounds.Poisson.median}
            - \log 2 \leq \mathrm{Median}(N_{\lambda}) - \lambda < \frac{1}{3},
        \end{equation}
        and these bounds are the best possible.
        As a complement, it was proved in \cite{MR2230380} that
        \begin{equation}\label{eq:Poisson.median.fluctuates}
            \liminf_{\lambda\to \infty} \big\{\mathrm{Median}(N_{\lambda}) - \lambda\big\} = - \frac{2}{3} \quad \text{and} \quad \limsup_{\lambda\to \infty} \big\{\mathrm{Median}(N_{\lambda}) - \lambda\big\} = \frac{1}{3}.
        \end{equation}
        In particular, there is no convergence for the sequence $\{\mathrm{Median}(N_{\lambda}) - \lambda\}_{\lambda > 0}$, which is a property that stems from the discrete nature of the Poisson distribution.

        \vspace{2mm}
        Finding bounds for the median of the negative binomial distribution and developing approximations for the median of the beta distribution is a similar problem of interest, but it is more difficult.
        As such, it has been addressed by fewer authors.
        The first paper to provide such bounds is \cite{MR1050698}, who followed a method analogous to \cite{MR858317}, where bounds for the difference between the median and the mean of the Gamma distribution were found first and a Poisson-Gamma relation was exploited to deduce bounds for the median of the Poisson distribution. In a similar fashion, \cite{MR1050698} found bounds for the median of the beta distribution and exploited a relationship between the cumulative distribution functions of the beta and negative binomial distributions, first proved by \cite{10.1093/biomet/25.1-2.158} and rediscovered by \cite{MR119280}, to find bounds for the median of the negative binomial distribution.
        In \cite{MR1235080}, the authors explained that the definition of median in \cite{MR1050698} was not standard and proceeded to derive bounds for the median of the negative binomial distribution (following a similar strategy) using the standard definition instead, namely: $\mathrm{Median}(X) \leqdef \inf\{x\in \R : \PP(X \leq x) \geq \frac{1}{2}\}$. The same bounds were rederived using a different method and parametrization (as well as bounds on the  50 percentage point) in \cite{MR1278595}.

        \vspace{2mm}
        Recently, a different approach was proposed in \cite{MR4135709} to bypass the fact that there is no asymptotic limit, because of \eqref{eq:Poisson.median.fluctuates}, for the median of the Poisson distribution. Their idea was to jitter the Poisson random variable $N_{\lambda}$ by a $U\sim \mathrm{Uniform}\hspace{0.2mm}(0,1)$ random variable and then investigate the limit.
        They found the very nice result:
        \begin{equation}\label{eq:asymptotics.jittered.Poisson.median}
            \mathrm{Median}(N_{\lambda} + U) - \lambda = \frac{1}{3} + \oo(\lambda^{-1}).
        \end{equation}
        The analogous problem for the negative binomial was left open Section~4 of \cite{MR4135709}, which we solve in Theorem~\ref{thm:median.negative.binomial} below.
        Our method of proof is completely new and based on the continuity correction from Theorem~\ref{thm:main.result}.
        As a testament to the versatility of our method, we give a short alternative proof of a slightly weaker version of \eqref{eq:asymptotics.jittered.Poisson.median} in Appendix~\ref{sec:jittered.Poisson.median}, which might be of independent interest.

        \begin{theorem}\label{thm:median.negative.binomial}
            For $r > 0$ and $p\in (0,1)$, let $K\sim \mathrm{Neg\hspace{0.3mm}Bin}\hspace{0.2mm}(r, p)$ and $U\sim \text{Uniform}\hspace{0.2mm}(0,1)$.
            Then, as $r\to \infty$, we have
            \begin{equation}\label{eq:thm:median.negative.binomial}
                \mathrm{Median}\hspace{0.2mm}(K + U) - \frac{r p}{q} = \frac{1}{2} - \frac{1 + p}{6 q} + \OO_p(r^{-1}).
            \end{equation}
            This result is illustrated in Figure~\ref{fig:Figure.1.analogue.Coeurjolly} for multiple values of $p$.
        \end{theorem}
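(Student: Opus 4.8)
The plan is to characterise $\mathrm{Median}(K+U)$ as the unique solution $M$ of $F(M)=\tfrac12$, where $F(x)\leqdef \PP(K+U\le x)$, and then to pin $M$ down to order $r^{-1}$ using the continuity correction of Theorem~\ref{thm:main.result}. Set $\mu\leqdef rpq^{-1}$ and $\sigma\leqdef \sqrt{rpq^{-2}}$, so that $\delta_k=(k-\mu)/\sigma$. Since $U\sim\mathrm{Uniform}(0,1)$, the function $F$ is continuous and strictly increasing on $[0,\infty)$ with $F(0^+)=0$ and $F(\infty)=1$, so $M$ is well defined and unique. Writing $M=a+t$ with $a\leqdef \lfloor M\rfloor\in\N_0$ and $t\in[0,1)$, we have $F(a+t)=\sum_{k=0}^{a-1}P_{r,p}(k)+t\,P_{r,p}(a)$, and applying \eqref{eq:thm:main.result.eq.2.reflect} both to $a-1$ and to $a$ (legitimate once $a$ lies in the bulk $B_{r,p}(\eta)$) and abbreviating $g(b)\leqdef \delta_{b-c_{r,p}^{\star}(b)}$, this becomes
\[
    F(a+t)=(1-t)\,\Phi(g(a))+t\,\Phi(g(a+1))+\OO_p(r^{-3/2}).
\]
The whole proof then consists in solving $F(M)=\tfrac12$ through this identity.

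First I would check that $a\in B_{r,p}(\eta)$. Near the upper edge of the bulk the leading term of $g$ is $\delta_a\asymp r^{1/6}\to\infty$, while the correction $c_{r,p}^{\star}(a)/\sigma$ stays $\oo(1)$ (the $\delta_{a-1/2}^2$-term of \eqref{eq:optimal.choice.c} contributes $\OO(r^{-1/6})$ and the $\delta_{a-1/2}^3$-term contributes $\OO_p(1)$ after division by $\sigma$), so $F$ equals $1+\oo(1)$ there; symmetrically $F=\oo(1)$ near the lower edge. By the intermediate value theorem $M$, hence $a$, lies strictly inside the bulk. From the displayed identity the convex combination of $\Phi(g(a))$ and $\Phi(g(a+1))$ equals $\tfrac12+\OO_p(r^{-3/2})$, which forces $\min(g(a),g(a+1))\le \OO_p(r^{-3/2})$ and $\max(g(a),g(a+1))\ge -\OO_p(r^{-3/2})$; combined with $|g(a+1)-g(a)|=\OO_p(r^{-1/2})$ (because $c_{r,p}^{\star}(a+1)-c_{r,p}^{\star}(a)=\OO_p(r^{-1/3})$ inside the bulk), this yields $g(a)=\OO_p(r^{-1/2})$, i.e.\ $a-\mu=c_{r,p}^{\star}(a)+\OO_p(1)$.

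Next I would bootstrap. The a priori estimate $c_{r,p}^{\star}(a)=\OO_p(r^{1/3})$ (valid inside the bulk) gives $a-\mu=\OO_p(r^{1/3})$, hence $\delta_a=\OO_p(r^{-1/6})$ and $\delta_{a-1/2}^2=\OO_p(r^{-1/3})$; feeding this back into \eqref{eq:optimal.choice.c} improves it to $c_{r,p}^{\star}(a)=\tfrac12-\tfrac{1+p}{6q}+\OO_p(r^{-1/3})=\OO_p(1)$, so $a-\mu=\OO_p(1)$, $\delta_a=\OO_p(r^{-1/2})$, $\delta_{a-1/2}^2=\OO_p(r^{-1})$, and one last substitution gives $c_{r,p}^{\star}(a)=\tfrac12-\tfrac{1+p}{6q}+\OO_p(r^{-1})$, and likewise for $c_{r,p}^{\star}(a+1)$. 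Since now $g(a),g(a+1)=\OO_p(r^{-1/2})$, a Taylor expansion of $\Phi$ about $0$ (the quadratic term vanishes because $\phi'(0)=0$) gives $\Phi(g)=\tfrac12+g/\sqrt{2\pi}+\OO_p(r^{-3/2})$, so $F(M)=\tfrac12$ reduces to $(1-t)g(a)+t\,g(a+1)=\OO_p(r^{-3/2})$. Writing $(1-t)g(a)+t\,g(a+1)=\sigma^{-1}\big(M-\mu-(1-t)c_{r,p}^{\star}(a)-t\,c_{r,p}^{\star}(a+1)\big)$ and using $\sigma\asymp r^{1/2}$ yields $M-\mu=(1-t)c_{r,p}^{\star}(a)+t\,c_{r,p}^{\star}(a+1)+\OO_p(r^{-1})=\tfrac12-\tfrac{1+p}{6q}+\OO_p(r^{-1})$, which is \eqref{eq:thm:median.negative.binomial}.

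I expect the main obstacle to be the localization/bootstrap of the second and third paragraphs: there is a genuine danger of circular reasoning, since $c_{r,p}^{\star}(a)$ depends on $\delta_{a-1/2}$, which depends on $a$, which in turn is determined by the median equation involving $c_{r,p}^{\star}$. Breaking the loop requires the successive sharpening $|a-\mu|=\OO_p(r^{2/3})\to\OO_p(r^{1/3})\to\OO_p(1)$, and at each stage one must verify that every term of \eqref{eq:optimal.choice.c}—in particular the $\delta_{a-1/2}^3$-term—remains negligible at the precision needed, as well as the harmless but slightly tedious fact that $F$ is genuinely above, resp.\ below, $\tfrac12$ at the two edges of the bulk.
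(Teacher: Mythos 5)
Your proof is correct and follows the same route as the paper: condition on $U$ to write $\PP(K+U\le M)=\tfrac12$ as a convex combination of two adjacent CDF values, apply the continuity correction~\eqref{eq:thm:main.result.eq.2.reflect}, Taylor-expand $\Phi$ at $0$, and solve for $M - rpq^{-1}$. The paper's version is terser and leaves implicit the a priori localization $|\lfloor M\rfloor - rpq^{-1}| = \OO_p(1)$ that is needed both to invoke the Taylor expansion with an $\OO_p(r^{-3/2})$ remainder and to reduce $c_{r,p}^{\star}(\lfloor M\rfloor)$ to $\tfrac12-\tfrac{1+p}{6q}+\OO_p(r^{-1})$; the bootstrap in your second and third paragraphs supplies precisely this missing justification.
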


        \begin{figure}[ht]
            \captionsetup{width=0.9\linewidth}
            \centering
            \begin{subfigure}[b]{0.3\textwidth}
                \centering
                \includegraphics[width=\textwidth, height=0.85\textwidth]{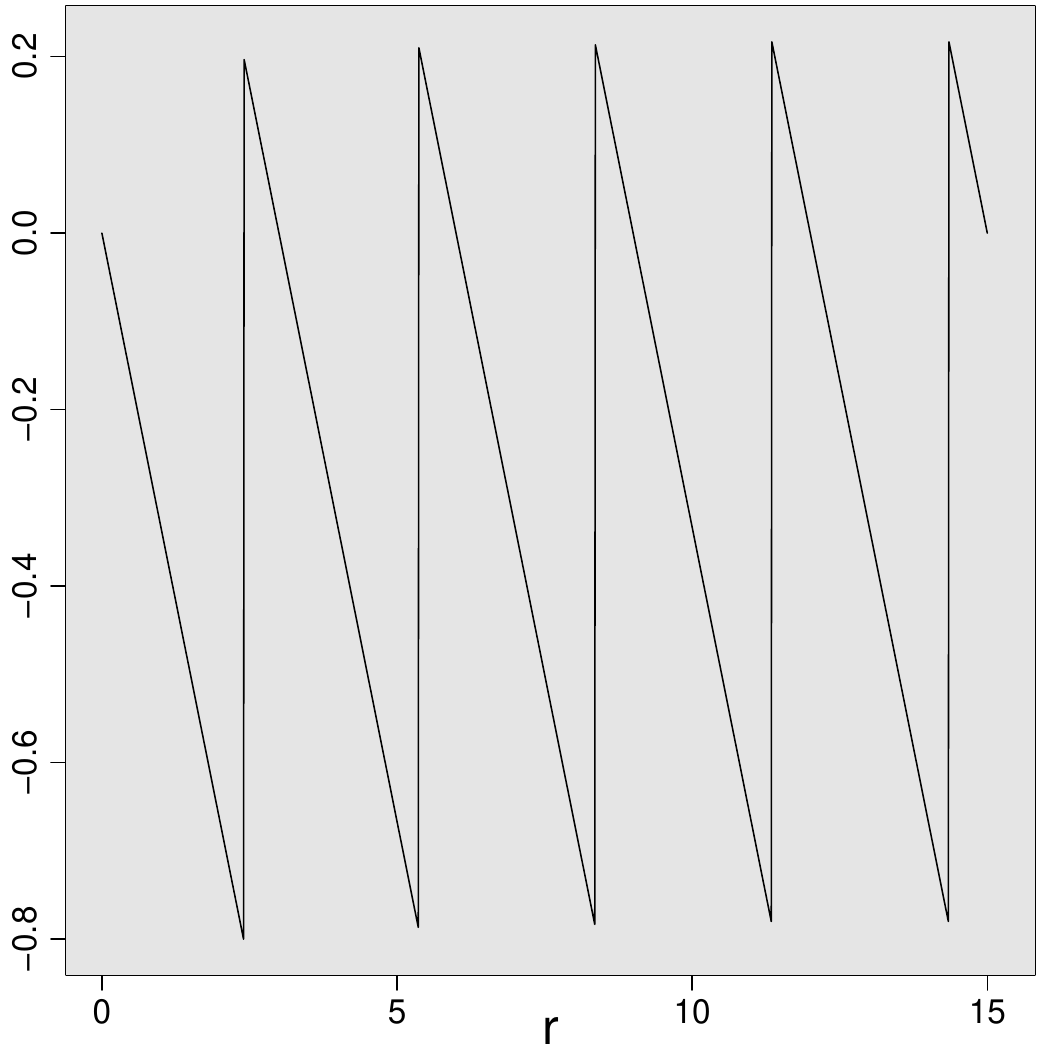}
                \vspace{-0.5cm}
                \caption{$p = 1/4$}
                \vspace{2mm}
            \end{subfigure}
            \quad
            \begin{subfigure}[b]{0.3\textwidth}
                \centering
                \includegraphics[width=\textwidth, height=0.85\textwidth]{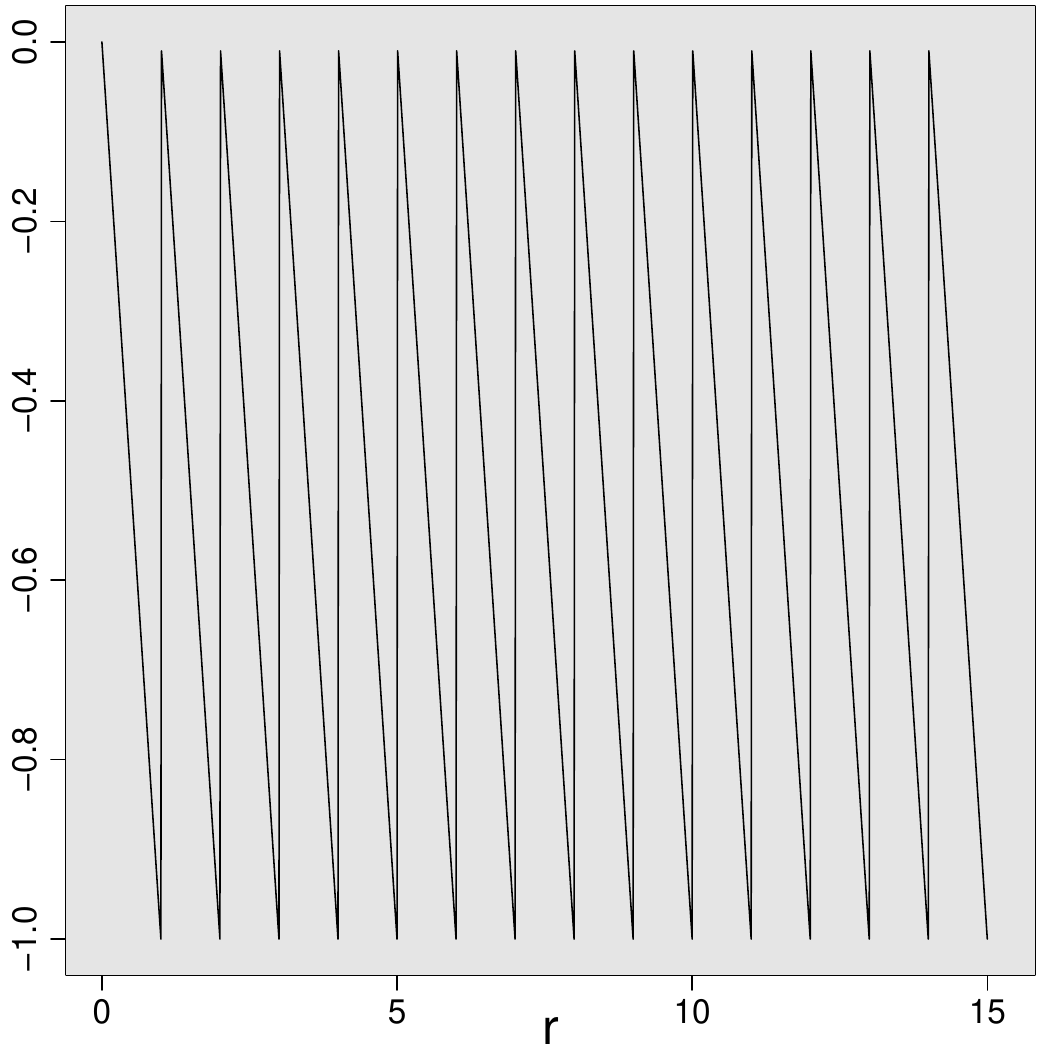}
                \vspace{-0.5cm}
                \caption{$p = 1/2$}
                \vspace{2mm}
            \end{subfigure}
            \quad
            \begin{subfigure}[b]{0.3\textwidth}
                \centering
                \includegraphics[width=\textwidth, height=0.85\textwidth]{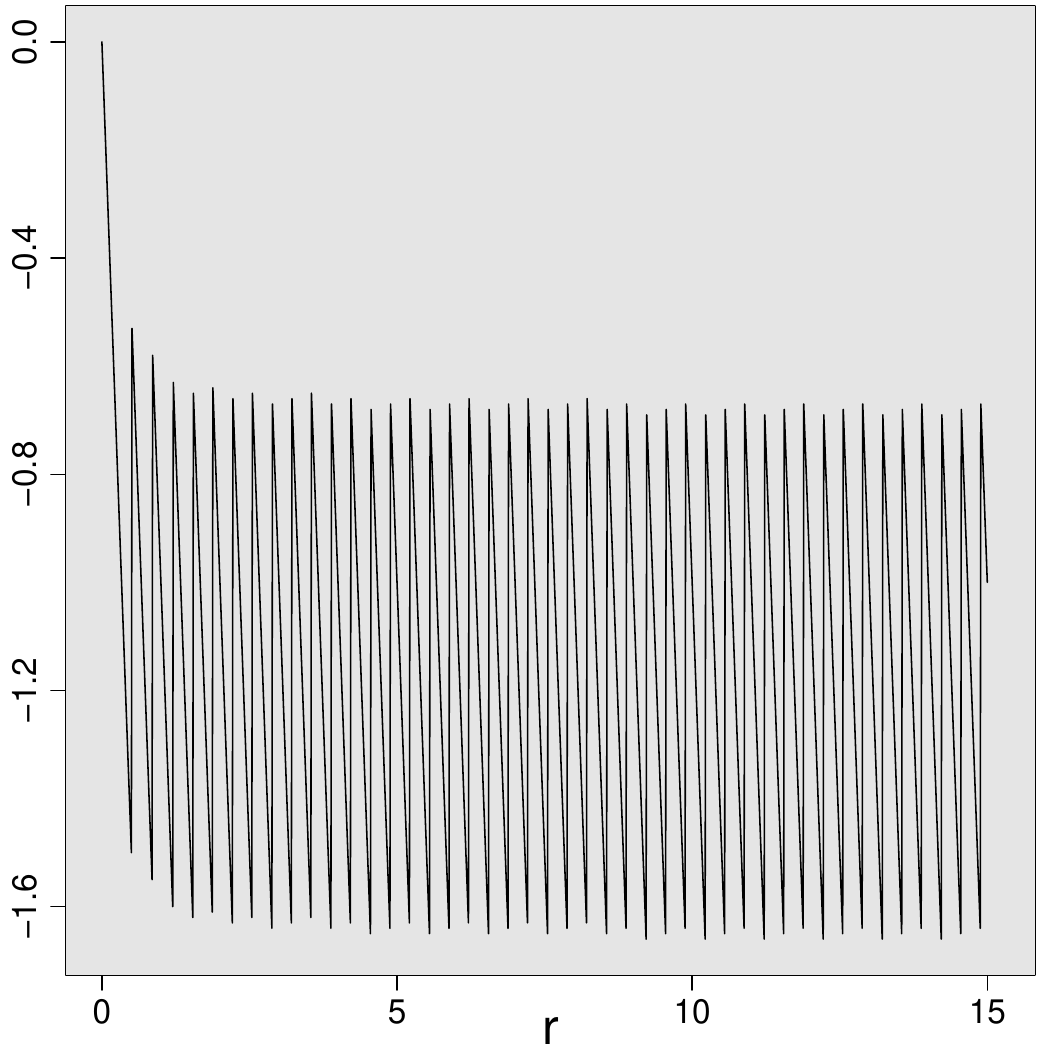}
                \vspace{-0.5cm}
                \caption{$p = 3/4$}
                \vspace{2mm}
            \end{subfigure}
            \begin{subfigure}[b]{0.3\textwidth}
                \centering
                \includegraphics[width=\textwidth, height=0.85\textwidth]{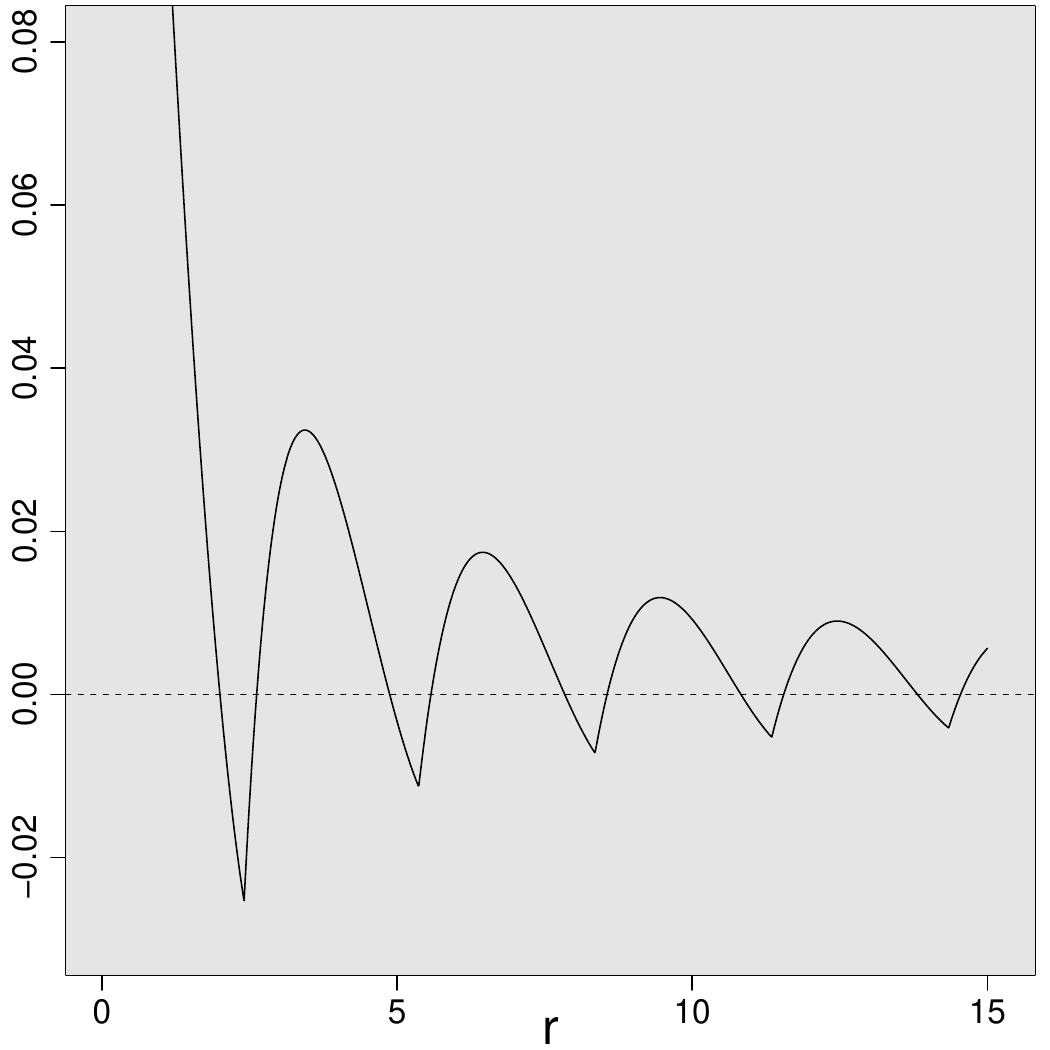}
                \vspace{-0.563cm}
                \caption{$p = 1/4$}
                \vspace{2mm}
            \end{subfigure}
            \quad
            \begin{subfigure}[b]{0.3\textwidth}
                \centering
                \includegraphics[width=\textwidth, height=0.85\textwidth]{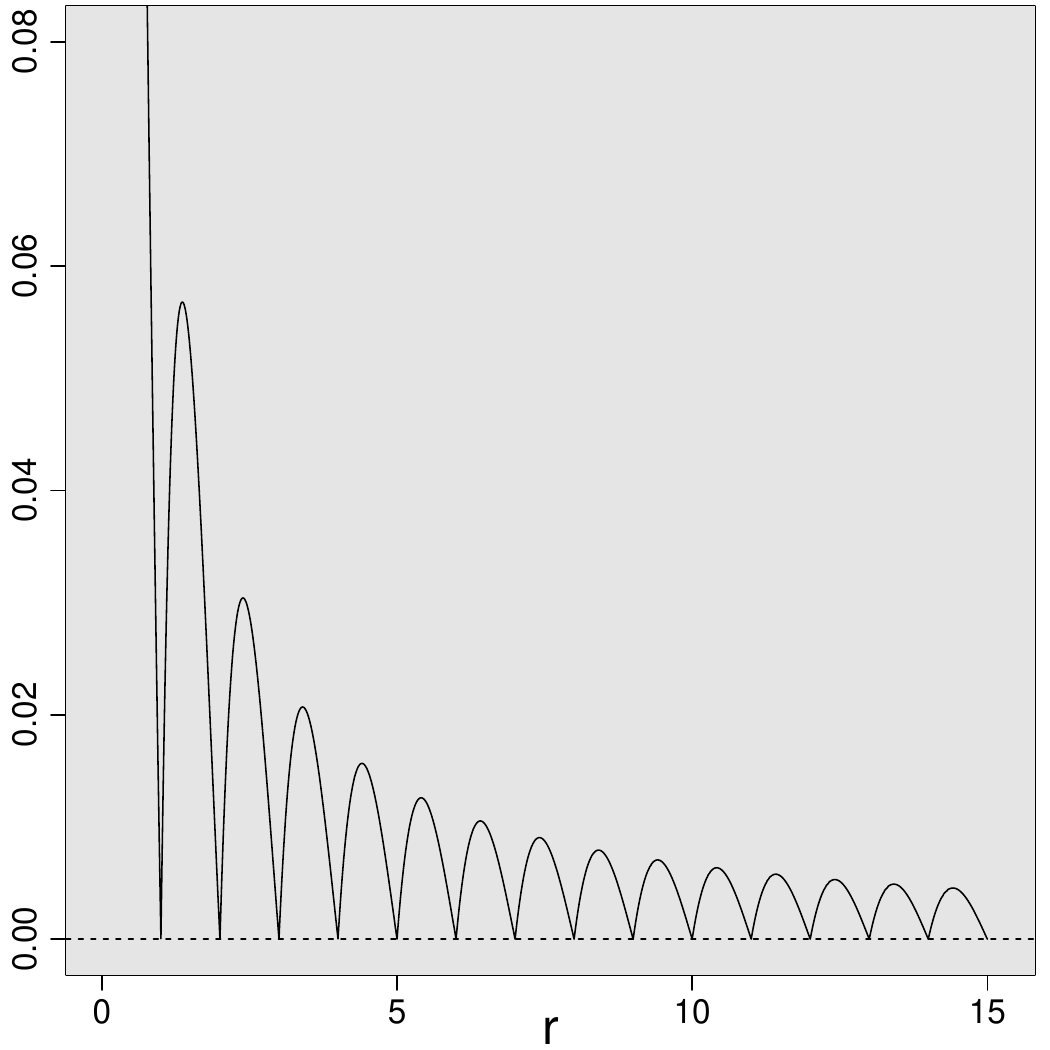}
                \vspace{-0.563cm}
                \caption{$p = 1/2$}
                \vspace{2mm}
            \end{subfigure}
            \quad
            \begin{subfigure}[b]{0.3\textwidth}
                \centering
                \includegraphics[width=\textwidth, height=0.85\textwidth]{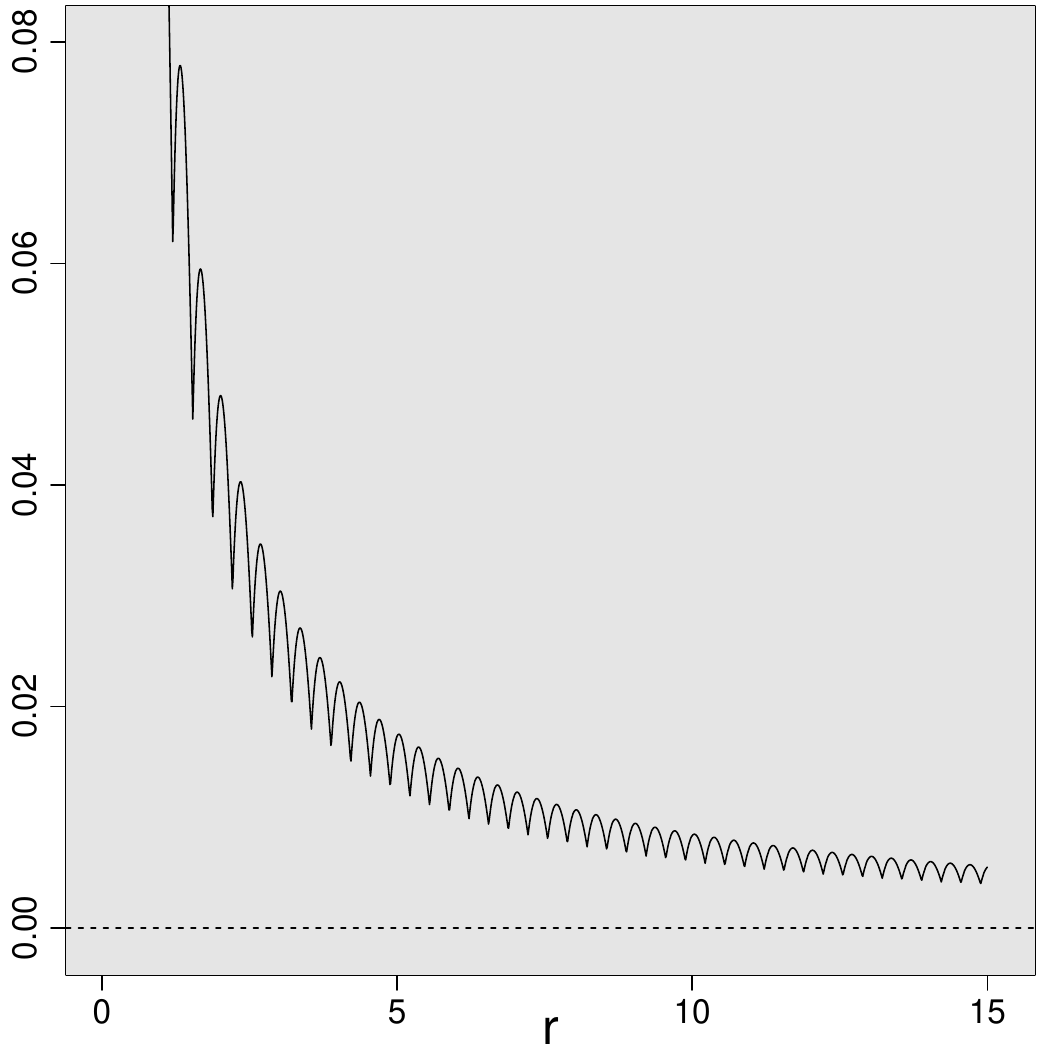}
                \vspace{-0.563cm}
                \caption{$p = 3/4$}
                \vspace{2mm}
            \end{subfigure}
            \vspace{-3mm}
            \caption{Illustration of $\mathrm{Med}(K) - \frac{r p}{q}$ (top row) and $\mathrm{Med}(K + U) - \frac{r p}{q} - (\frac{1}{2} - \frac{1 + p}{6 q})$ (bottom row), for $p\in \{\tfrac{1}{4},\tfrac{1}{2},\tfrac{3}{4}\}$.}
            \label{fig:Figure.1.analogue.Coeurjolly}
        \end{figure}

        \begin{proof}[Proof of Theorem~\ref{thm:median.negative.binomial}]
            By conditioning on $U$ and using the continuity correction from Theorem~\ref{thm:main.result}, we want to find $t = \mathrm{Median}\hspace{0.2mm}(K + U) > 0$ such that
            \begin{align}\label{eq:thm:median.negative.binomial.beginning}
                \frac{1}{2}
                &= \int_0^1 \PP(K \leq t - u) \, \rd u \notag \\[0.5mm]
                &= \PP(K \leq \lfloor t \rfloor) \cdot \{t\} + \PP(K \leq \lfloor t \rfloor - 1) \cdot (1 - \{t\}) \notag \\[2mm]
                &= \Phi(\delta_{\lfloor t \rfloor + 1 - c_{r,p}^{\star}(\lfloor t \rfloor + 1)}) \cdot \{t\} + \Phi(\delta_{\lfloor t \rfloor - c_{r,p}^{\star}(\lfloor t \rfloor)}) \cdot (1 - \{t\}) + \OO_p(r^{-3/2}),
            \end{align}
            where $\{t\}$ denotes the fractional part of $t$.
            Now, we have the following Taylor series expansion for $\Phi$ at $0$:
            \begin{equation}
                \Phi(x) = \frac{1}{2} + \frac{x}{\sqrt{2\pi}} + \OO(x^3).
            \end{equation}
            Therefore, \eqref{eq:thm:median.negative.binomial.beginning} becomes
            \begin{equation}\label{eq:upto}
                0 = \frac{1}{\sqrt{2\pi}} \left[\delta_{\lfloor t \rfloor + 1 - c_{r,p}^{\star}(\lfloor t \rfloor + 1)} \cdot \{t\} + \delta_{\lfloor t \rfloor - c_{r,p}^{\star}(\lfloor t \rfloor)} \cdot (1 - \{t\})\right] + \OO_p(r^{-3/2}).
            \end{equation}
            After rearranging some terms, this is equivalent to
            \begin{equation}
                 t - \frac{r p}{q} = c_{r,p}^{\star}(\lfloor t \rfloor + 1) \cdot \{t\} + c_{r,p}^{\star}(\lfloor t \rfloor) \cdot (1 - \{t\}) + \OO_p(r^{-1}).
            \end{equation}
            By applying the expression for $c_{r,p}^{\star}$ in \eqref{eq:optimal.choice.c}, this is
            \begin{equation}
                 t - \frac{r p}{q} = \frac{1}{2} - \frac{1 + p}{6 q} + \OO_p(r^{-1}).
            \end{equation}
            This ends the proof.
        \end{proof}

        When $r$ is known, we can follow the main idea in \cite{MR4135709} to build a simple, robust and consistent estimator for the parameter $p$ of the $\mathrm{Neg\hspace{0.3mm}Bin}\hspace{0.2mm}(r,p)$ distribution. The estimator is robust to outliers in the sense that it depends only on the sample median.

        \begin{corollary}[Robust estimator for $p$ when $r$ is known]\label{cor:robust.estimator}
            Let $r > 0$ and $p\in (0,1)$.
            Let $K_1,K_2,\dots,K_n\sim \mathrm{Neg\hspace{0.3mm}Bin}\hspace{0.2mm}(r,p)$ and $U_1,U_2,\dots,U_n\sim \mathrm{Uniform}\hspace{0.2mm}(0,1)$ be i.i.d., and define $X_i \leqdef K_i + U_i$ for all $i\in \{1,2,\dots,n\}$.
            Then,
            \begin{equation}\label{eq:cor:robust.estimator}
                \hat{p}_n^{\scriptscriptstyle(\mathrm{R})} \leqdef \frac{\widehat{\mathrm{Median}}(X_1, X_2, \dots, X_n) - \frac{1}{3}}{\widehat{\mathrm{Median}}(X_1, X_2, \dots, X_n) - \frac{2}{3} + r} \stackrel{\PP}{\longrightarrow} p,
            \end{equation}
            as $n\to \infty$ and then $r\to \infty$.
        \end{corollary}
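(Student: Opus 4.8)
The plan is to separate the two limits exactly as the statement does: first let $n \to \infty$ with $r$ held fixed, then let $r \to \infty$. The first step is to show that, for fixed $r$, the empirical median is a consistent estimator of the population median $m_r \leqdef \mathrm{Median}(K + U)$. Here the jittering is what makes this work: since $U_1$ is independent of $K_1$, the variable $X_1 = K_1 + U_1$ has cumulative distribution function $F_r(x) = \sum_{k \geq 0} P_{r,p}(k)\, \min(\max(x - k, 0), 1)$, which is continuous on $\R$ and strictly increasing on $(0, \infty)$ because every $P_{r,p}(k)$ is positive; hence $m_r$ is the unique root of $F_r = \tfrac{1}{2}$, and $F_r$ is locally Lipschitz there with a strictly positive one-sided derivative. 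Under exactly this kind of regularity at the median, the classical consistency theorem for sample quantiles gives $\widehat{\mathrm{Median}}(X_1, \dots, X_n) \stackrel{\PP}{\longrightarrow} m_r$ as $n \to \infty$, irrespective of the tie-breaking convention used in the definition of the empirical median.

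Second, I would transfer this through the map defining the estimator. For $r > \tfrac{2}{3}$ one has $m_r - \tfrac{2}{3} + r \geq r - \tfrac{2}{3} > 0$ because $m_r \geq 0$, so $y \mapsto (y - \tfrac{1}{3})/(y - \tfrac{2}{3} + r)$ is continuous at $y = m_r$, and the continuous mapping theorem gives $\hat{p}_n^{\scriptscriptstyle(\mathrm{R})} \stackrel{\PP}{\longrightarrow} \rho_r \reqdef (m_r - \tfrac{1}{3})/(m_r - \tfrac{2}{3} + r)$ as $n \to \infty$. Third, I would let $r \to \infty$ in $\rho_r$: by Theorem~\ref{thm:median.negative.binomial}, $m_r = \tfrac{r p}{q} + \tfrac{1}{2} - \tfrac{1+p}{6q} + \OO_p(r^{-1})$, and collecting the constants over the common denominator $6q$ (with $q = 1 - p$) produces the two identities $m_r - \tfrac{1}{3} = \tfrac{p}{q}\big(r - \tfrac{1}{3}\big) + \OO_p(r^{-1})$ and $m_r - \tfrac{2}{3} + r = \tfrac{1}{q}\big(r - \tfrac{1}{3}\big) + \OO_p(r^{-1})$ — which is precisely why the constants $\tfrac{1}{3}$ and $\tfrac{2}{3}$ appear in the estimator — whence $\rho_r = p + \OO_p(r^{-2}) \to p$. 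Chaining the two displayed limits proves \eqref{eq:cor:robust.estimator}.

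I expect the only genuinely delicate point to be the first step: one must check that jittering by a $\mathrm{Uniform}(0,1)$ makes the distribution sufficiently regular at its median — a strictly increasing, locally Lipschitz distribution function, equivalently a density bounded away from $0$ and $\infty$ in a neighborhood of $m_r$ — for the textbook sample-quantile consistency result to apply. The computation in the third step is elementary once Theorem~\ref{thm:median.negative.binomial} is granted, the second step is a direct application of the continuous mapping theorem, and no interchange of limits is needed because the asserted convergence is the iterated one.
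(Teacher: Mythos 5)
Your proposal is correct and follows essentially the same route as the paper: consistency of the sample median for the jittered (hence continuous) distribution, then Theorem~\ref{thm:median.negative.binomial} plus the continuous mapping theorem to pass to the iterated limit. The only differences are cosmetic — you spell out the regularity (continuous, strictly increasing CDF) that the paper delegates to a textbook citation, and you organize the algebra via the tidy identities $m_r - \tfrac{1}{3} = \tfrac{p}{q}(r - \tfrac{1}{3}) + \OO_p(r^{-1})$ and $m_r - \tfrac{2}{3} + r = \tfrac{1}{q}(r - \tfrac{1}{3}) + \OO_p(r^{-1})$, whereas the paper solves the asymptotic median equation for $p$ directly; both are equivalent.
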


        \begin{proof}[Proof of Corollary~\ref{cor:robust.estimator}]
            From standard asymptotic theory, see, e.g., \cite{MR1652247}, p.47, we have the convergence in probability of the sample median:
            \begin{equation}
                \widehat{\mathrm{Median}}(X_1, X_2, \dots, X_n) \stackrel{\PP}{\longrightarrow} \mathrm{Median}\hspace{0.2mm}(X_1), \quad \text{as } n\to \infty.
            \end{equation}
            Hence, by Theorem~\ref{thm:median.negative.binomial}, we have, as $n\to \infty$ and then $r\to \infty$,
            \begin{equation}\label{eq:cor:robust.estimator.last}
                \widehat{\mathrm{Median}}(X_1, X_2, \dots, X_n) - \frac{6 r p - (1 + p)}{6 (1 - p)}\stackrel{\PP}{\longrightarrow} \frac{1}{2}.
            \end{equation}
            By equalizing both sides and solving for $p$, we get the estimator $\hat{p}_n^{\scriptscriptstyle(\mathrm{R})}$ in \eqref{eq:cor:robust.estimator}, which converges in probability to $p$ by \eqref{eq:cor:robust.estimator.last} and the continuous mapping theorem.
        \end{proof}

        In the top row of Figure~\ref{fig:Figure.2.3.analogue.Coeurjolly} below, a comparison between the empirical biases of the robust estimator, $\hat{p}_n^{\scriptscriptstyle(\mathrm{R})}$, and the empirical biases of the maximum likelihood (ML) estimator, $\hat{p}_n^{\scriptscriptstyle(\mathrm{ML})}$, is made.
        The ratio of their root mean squared error (RMSE) is presented in the bottom row of the same figure.
        We see that the greater robustness to outliers of the estimator $\hat{p}_n^{\scriptscriptstyle(\mathrm{R})}$, compared to $\hat{p}_n^{\scriptscriptstyle(\mathrm{ML})} = (1 + r \hspace{0.2mm}n / \sum_{i=1}^n K_i)^{-1}$, comes at the cost of a slightly worst performance.

        \vspace{2mm}
        \begin{figure}[ht]
            \captionsetup{width=0.90\linewidth}
            \centering
            \begin{subfigure}[b]{0.400\textwidth}
                \centering
                \includegraphics[width=\textwidth, height=0.85\textwidth]{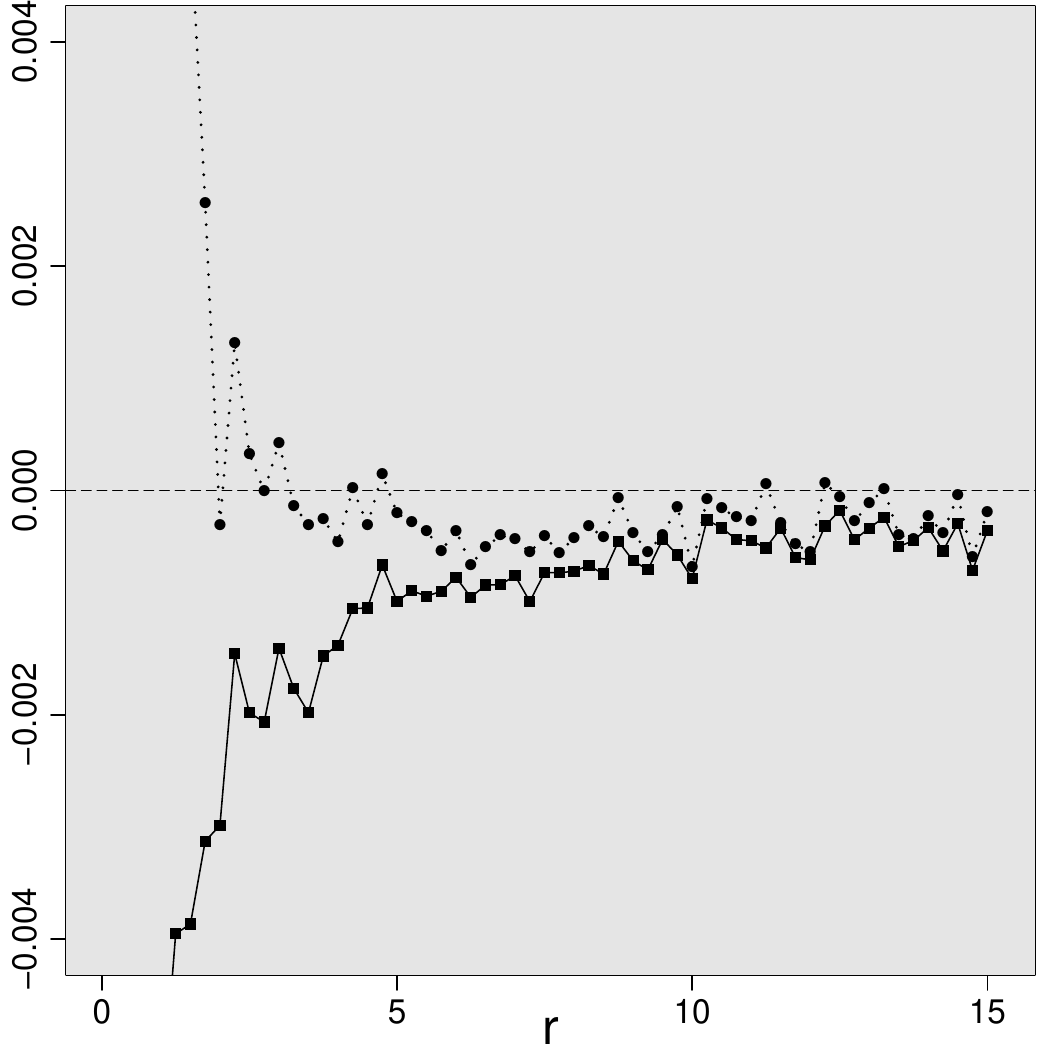}
                \vspace{-0.5cm}
                \caption{$n = 50$.\vspace{2mm}}
            \end{subfigure}
            \quad
            \begin{subfigure}[b]{0.400\textwidth}
                \centering
                \includegraphics[width=\textwidth, height=0.85\textwidth]{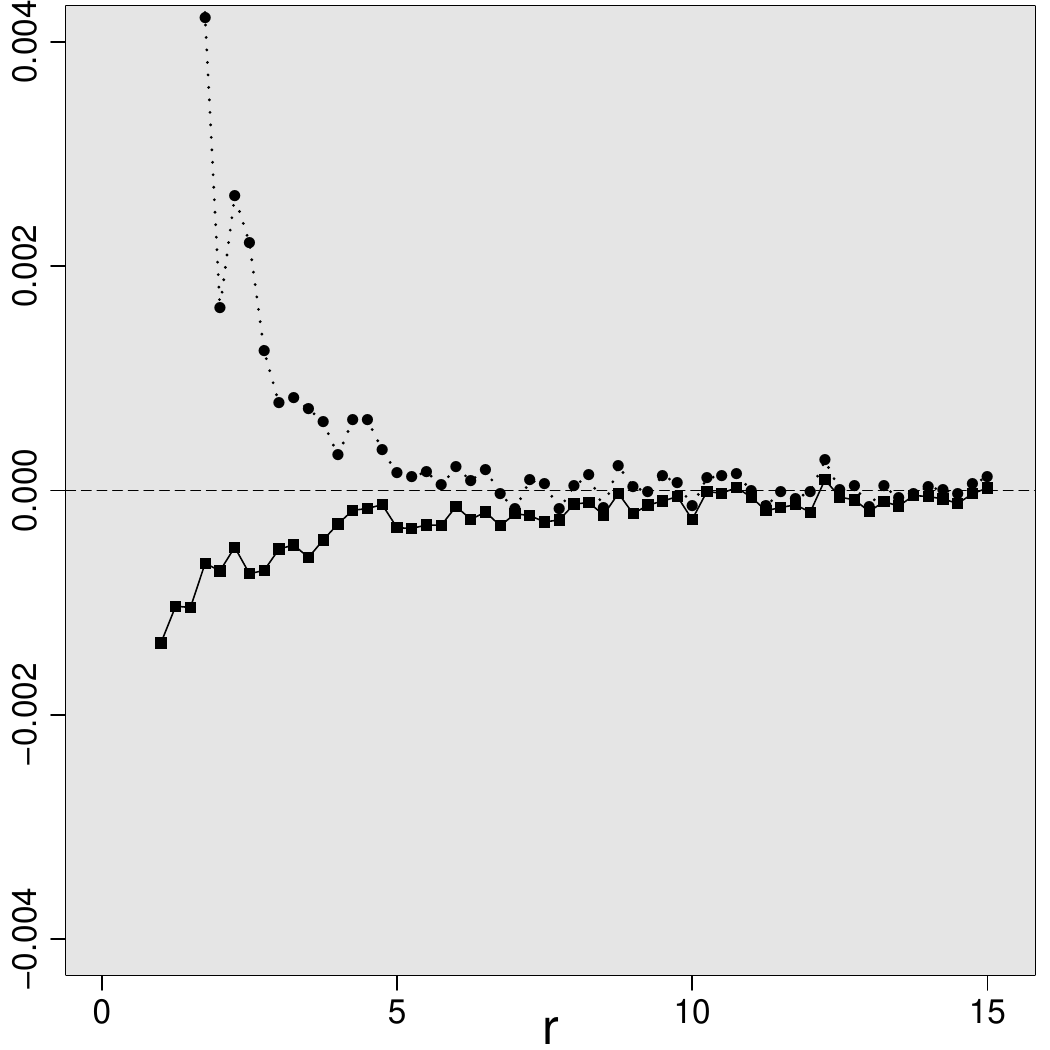}
                \vspace{-0.563cm}
                \caption{$n = 200$.\vspace{2mm}}
            \end{subfigure} \\
            \begin{subfigure}[b]{0.400\textwidth}
                \centering
                \includegraphics[width=\textwidth, height=0.85\textwidth]{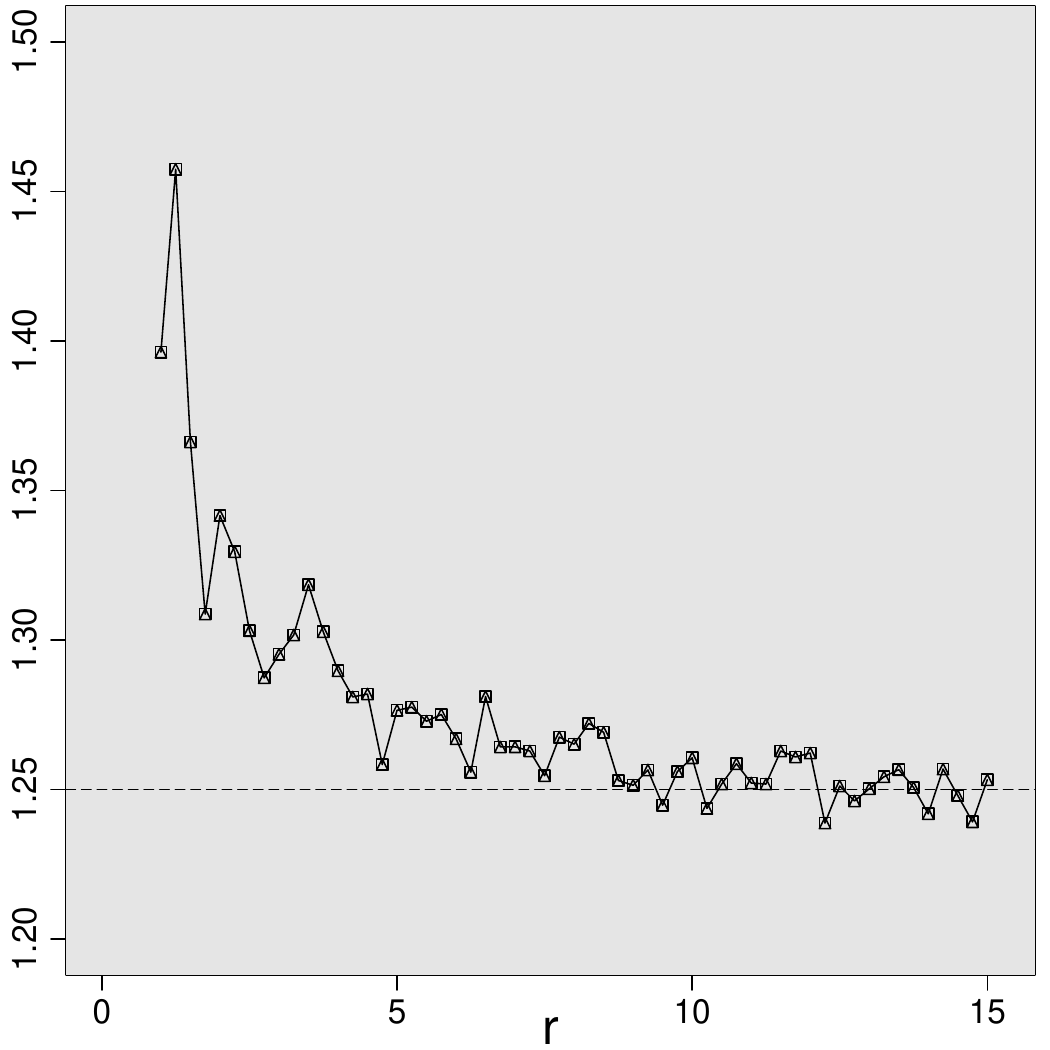}
                \vspace{-0.5cm}
                \caption{$n = 50$.}
            \end{subfigure}
            \quad
            \begin{subfigure}[b]{0.400\textwidth}
                \centering
                \includegraphics[width=\textwidth, height=0.85\textwidth]{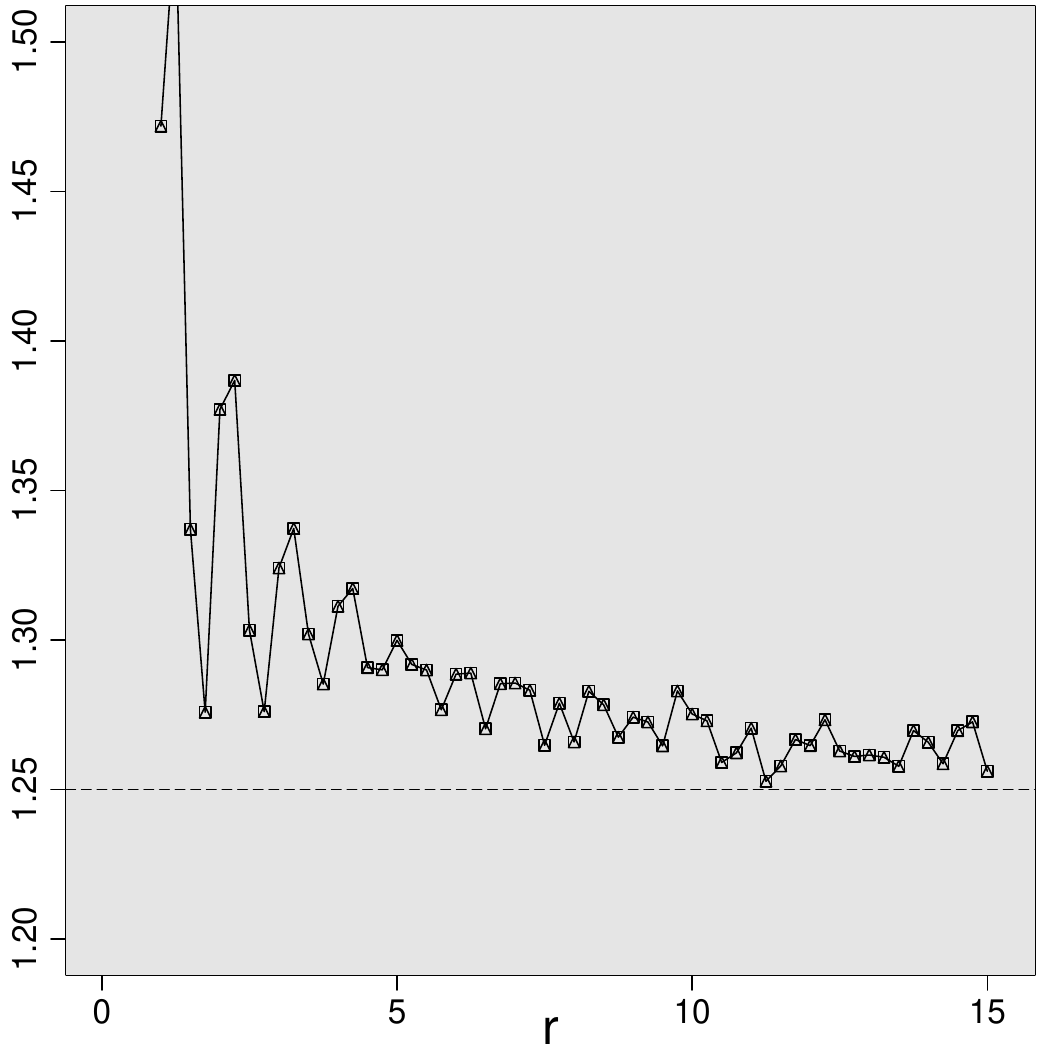}
                \vspace{-0.563cm}
                \caption{$n = 200$.}
            \end{subfigure}
            \vspace{-2mm}
            \caption{The top row shows the empirical biases for the estimators of $p$ (squares and solid lines for $\hat{p}_n^{\scriptscriptstyle(\mathrm{ML})}$, and disks and dotted lines for $\hat{p}_n^{\scriptscriptstyle(\mathrm{R})}$), based on 10,000 samples of size $n$ from a $\mathrm{Neg\hspace{0.3mm}Bin}\hspace{0.2mm}(r,p)$ distribution, where $p = 1/2$ and $r\in \frac{1}{4} \cdot \{2,3,\dots,60\}$. The bottom row shows the RMSE for the robust estimator, $\hat{p}_n^{\scriptscriptstyle(\mathrm{R})}$, over the ML estimator, $\hat{p}_n^{\scriptscriptstyle(\mathrm{ML})}$.}
            \label{fig:Figure.2.3.analogue.Coeurjolly}
        \end{figure}

        However, when $1\%$ of the data is polluted by a hundred times scaled Poisson random variables (with mean $\lambda = r p$), then Figure~\ref{fig:Figure.2.3.analogue.Coeurjolly} changes to Figure~\ref{fig:Figure.2.3.analogue.Coeurjolly.polluted} below.
        The empirical biases of the ML estimator become significant whereas our robust estimator remains unaffected. This was expected because our robust estimator depends only on the median, and the sample median is unaltered by (extreme) outliers. The RMSE hovering around 0.25 now heavily favors our robust estimator even if a very small fraction of the data was polluted. This means that the previous RMSE of 1.25 can be a small price to pay in performance in practice if we are not sure that the data we are concerned with is purely negative binomial due to some observations deviating significantly from the bulk. When the Poisson pollutants are only scaled by 10 instead of 100 (simulations are not shown here), the RMSE is below $1$ for all $r \geq 7$ (i.e., in favor of the robust estimator). If the scale factor stays at $10$ but the pollution rate increases above $1\%$, than the whole RMSE curve is pushed downward below $1$ the further the pollution rate increases, again favoring the robust estimator. If the scale factor of the Poisson pollutant is close to $1$ however, then the ML estimator has the advantage (a RMSE around 1.25). Therefore, care should be taken before choosing the robust estimator when there are no clear outliers in the data. If there seems to be multiple (severe) outliers in the data however, than the robust estimator is a safe bet compared to the ML estimator and should perform better overall.

        \vspace{2mm}
        \begin{figure}[ht]
            \captionsetup{width=0.90\linewidth}
            \centering
            \begin{subfigure}[b]{0.400\textwidth}
                \centering
                \includegraphics[width=\textwidth, height=0.85\textwidth]{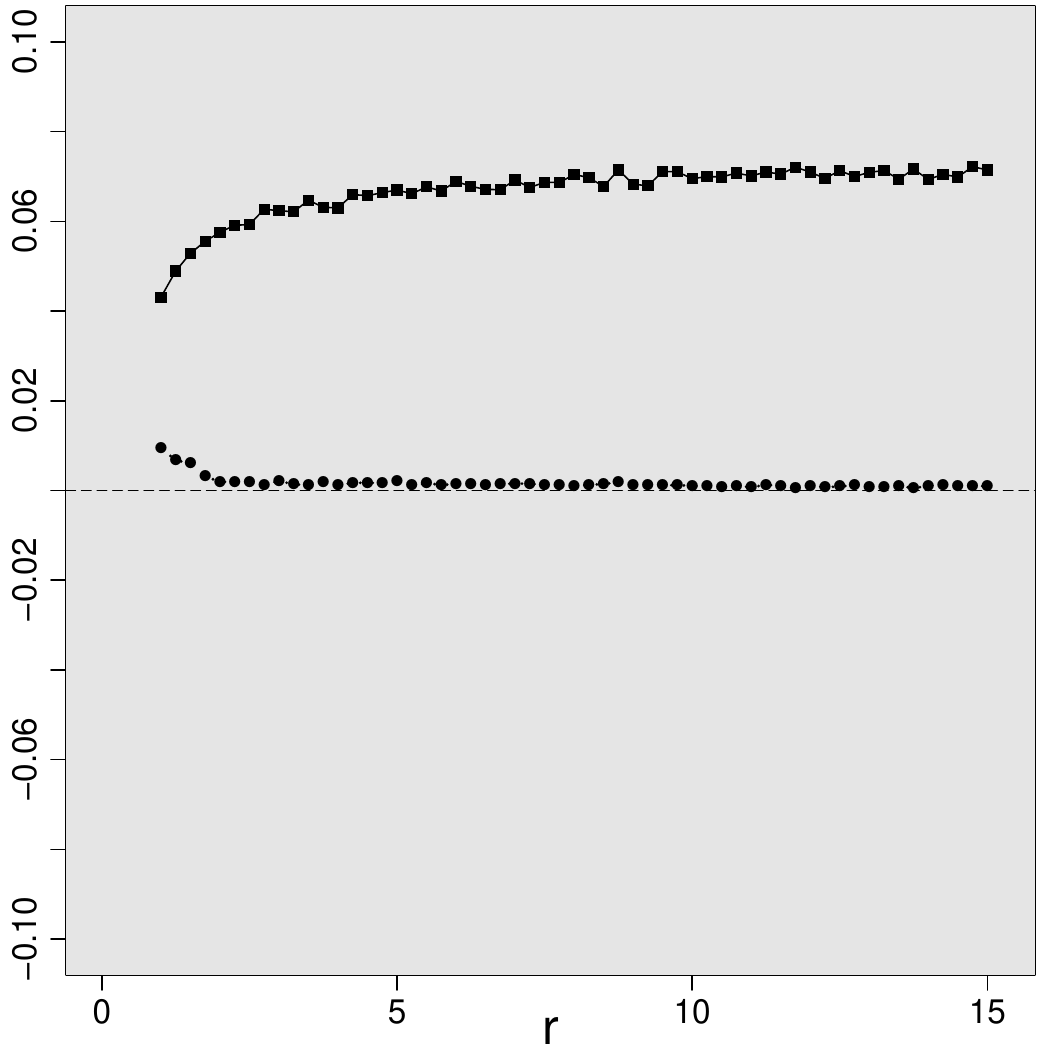}
                \vspace{-0.5cm}
                \caption{$n = 50$.\vspace{2mm}}
            \end{subfigure}
            \quad
            \begin{subfigure}[b]{0.400\textwidth}
                \centering
                \includegraphics[width=\textwidth, height=0.85\textwidth]{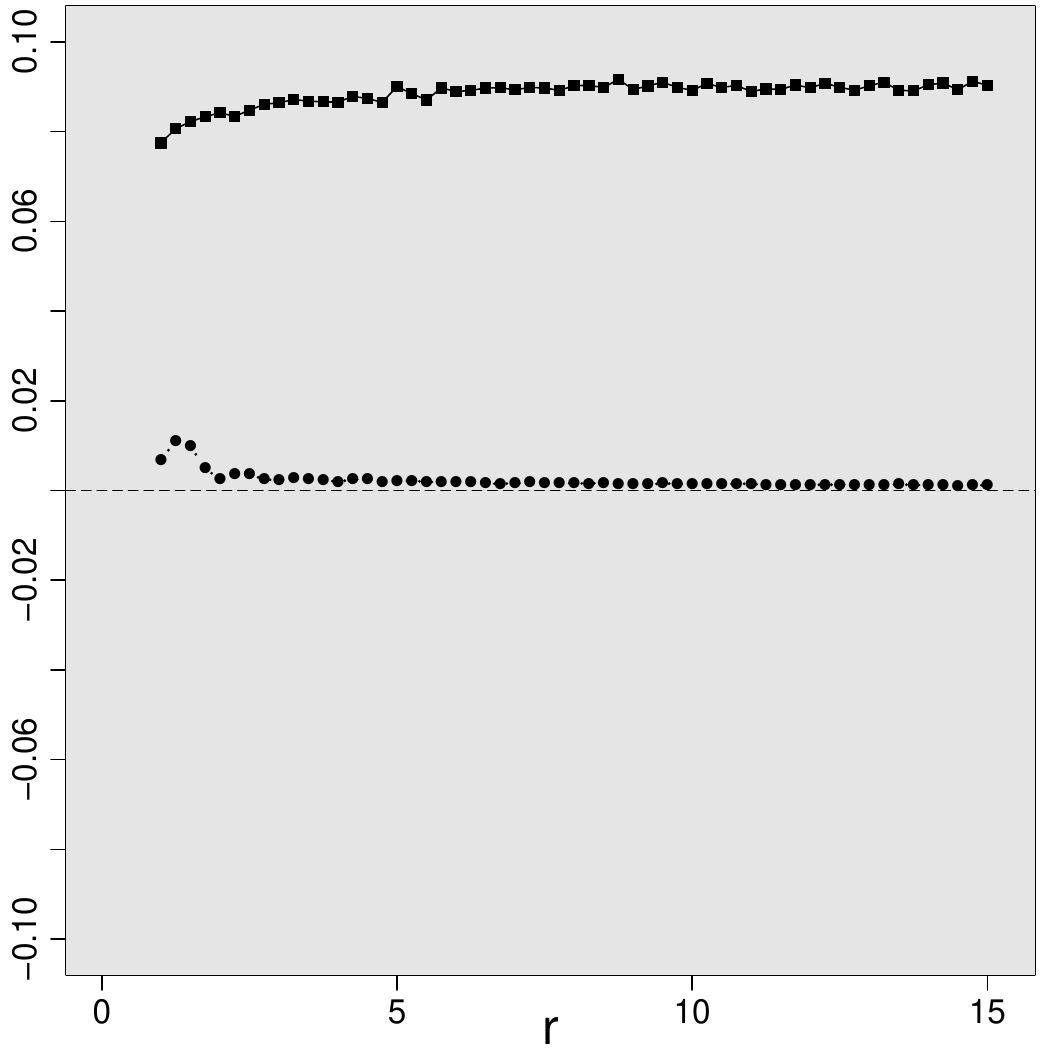}
                \vspace{-0.563cm}
                \caption{$n = 200$.\vspace{2mm}}
            \end{subfigure}
            \begin{subfigure}[b]{0.400\textwidth}
                \centering
                \includegraphics[width=\textwidth, height=0.85\textwidth]{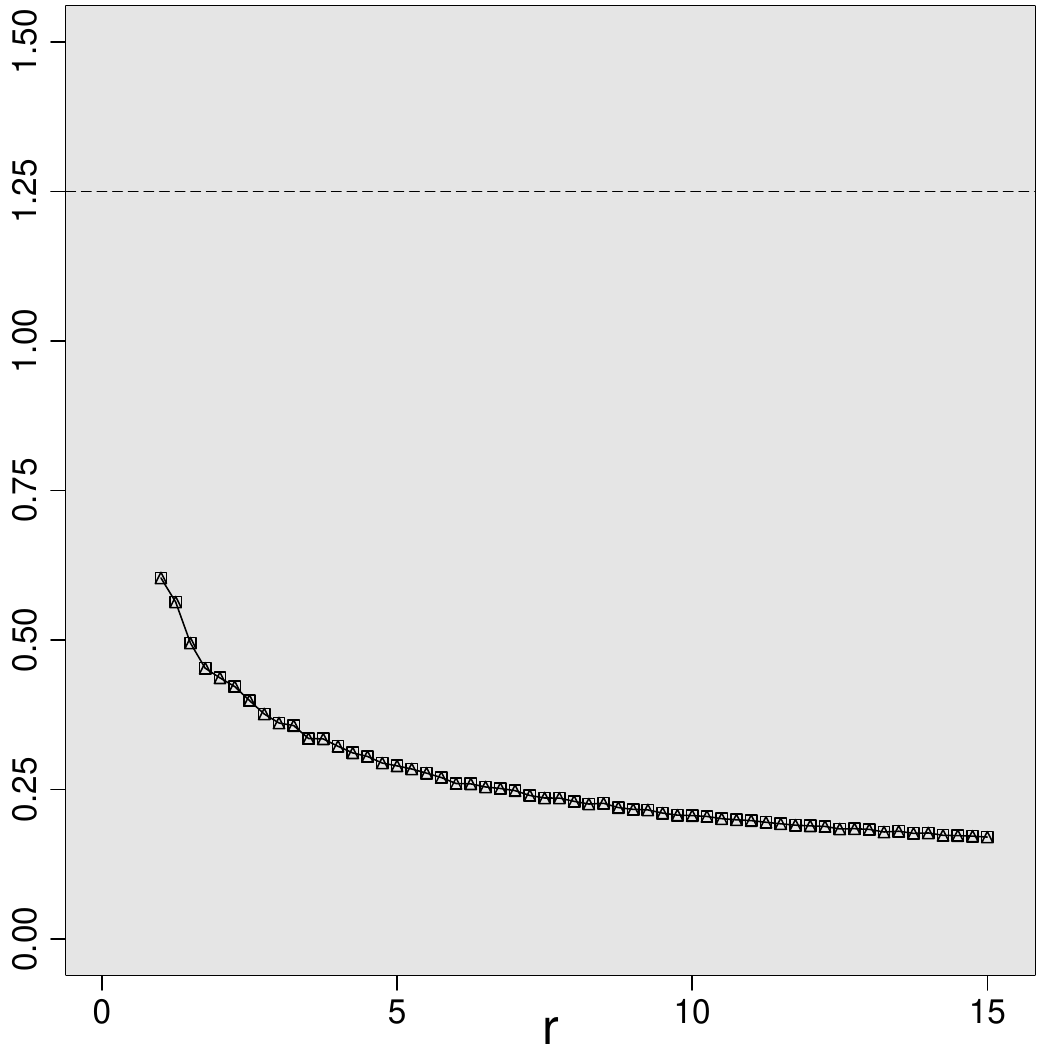}
                \vspace{-0.5cm}
                \caption{$n = 50$.}
            \end{subfigure}
            \quad
            \begin{subfigure}[b]{0.400\textwidth}
                \centering
                \includegraphics[width=\textwidth, height=0.85\textwidth]{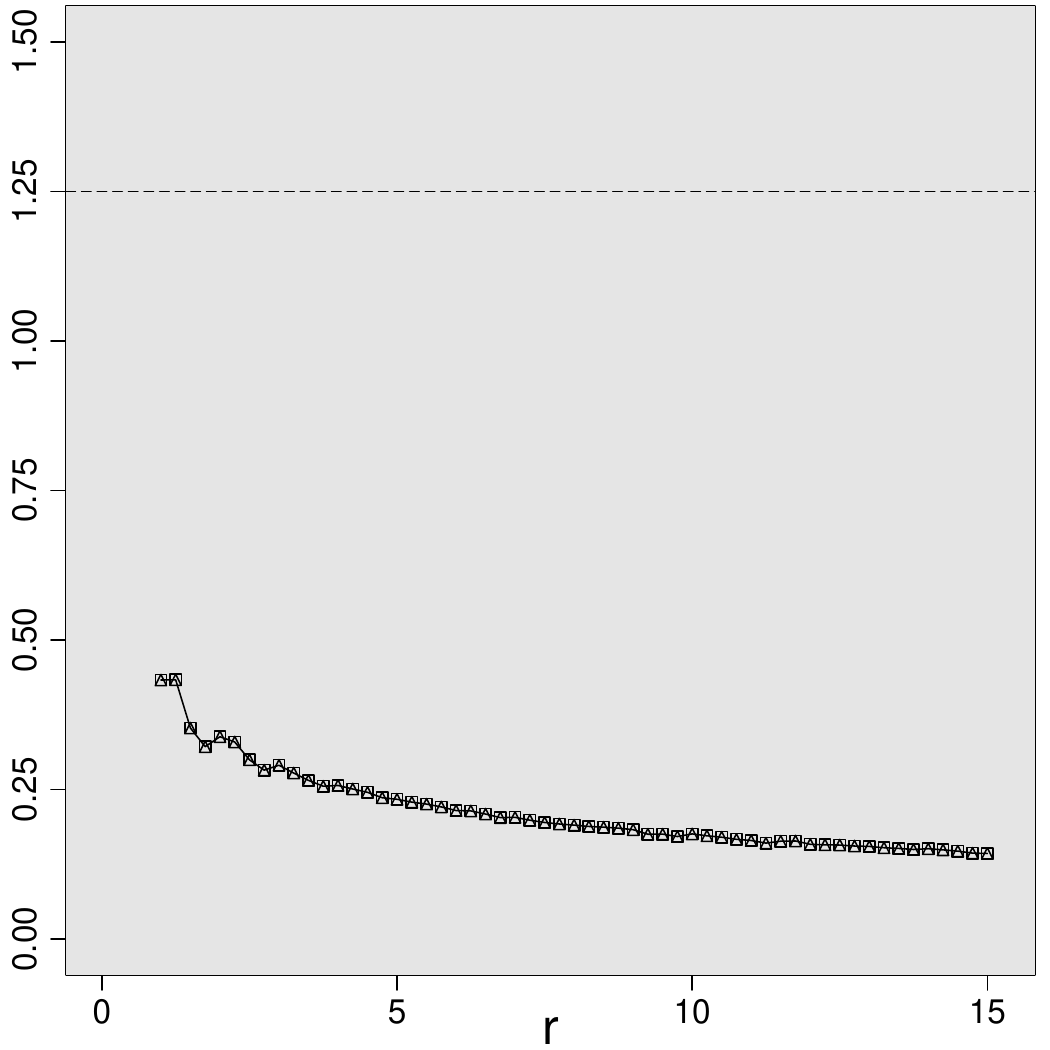}
                \vspace{-0.563cm}
                \caption{$n = 200$.}
            \end{subfigure}
            \vspace{-2mm}
            \caption{The top row shows the empirical biases for the estimators of $p$ (squares and solid lines for $\hat{p}_n^{\scriptscriptstyle(\mathrm{ML})}$, and disks and dotted lines for $\hat{p}_n^{\scriptscriptstyle(\mathrm{R})}$), based on 10,000 samples of size $n$ from a polluted negative binomial distribution, namely $0.99 \cdot \mathrm{Neg\hspace{0.3mm}Bin}\hspace{0.2mm}(r,p) + 0.01 \cdot 100 \cdot \mathrm{Poisson}\hspace{0.2mm}(r p)$, where $p = 1/2$ and $r\in \frac{1}{4} \cdot \{2,3,\dots,60\}$. The bottom row shows the RMSE for the robust estimator, $\hat{p}_n^{\scriptscriptstyle(\mathrm{R})}$, over the ML estimator, $\hat{p}_n^{\scriptscriptstyle(\mathrm{ML})}$.}
            \label{fig:Figure.2.3.analogue.Coeurjolly.polluted}
        \end{figure}

        When $r$ is unknown, we can apply a similar reasoning to obtain a robust estimator $(\hat{r}_n^{(R)}, \hat{p}_n^{(R)})$ for the pair $(r,p)$.
        To do so, we need the asymptotics of another quantile, say the third quartile (other viable choices are also possible).
        For $r > 0$ and $p\in (0,1)$, let $K\sim \mathrm{Neg\hspace{0.3mm}Bin}\hspace{0.2mm}(r, p)$ and $U\sim \text{Uniform}\hspace{0.2mm}(0,1)$.
        By rerunning the proof of Theorem~\ref{thm:median.negative.binomial} up to \eqref{eq:upto}, we want to find $t = \mathrm{ThirdQuartile}\hspace{0.2mm}(K + U) > 0$ such that
        \begin{equation}
            \frac{3}{4} = \frac{1}{2} + \frac{1}{\sqrt{2\pi}} \left[\delta_{\lfloor t \rfloor + 1 - c_{r,p}^{\star}(\lfloor t \rfloor + 1)} \cdot \{t\} + \delta_{\lfloor t \rfloor - c_{r,p}^{\star}(\lfloor t \rfloor)} \cdot (1 - \{t\})\right] + \OO_p(r^{-3/2}).
        \end{equation}
        If we multiply both sides by $ \sqrt{2\pi} \sqrt{r p q^{-2}}$, we get
        \begin{equation}
            \sqrt{\frac{\pi r p}{8 q^2}} = t - \left[c_{r,p}^{\star}(\lfloor t \rfloor + 1) \cdot \{t\} + c_{r,p}^{\star}(\lfloor t \rfloor) \cdot (1 - \{t\})\right] - \frac{r p}{q} + \OO_p(r^{-1}).
        \end{equation}
        After rearranging some terms and applying the expression for $c_{r,p}^{\star}$ in \eqref{eq:optimal.choice.c}, the above is equivalent to
        \begin{equation}\label{eq:thm:third.quartile.negative.binomial}
            t - \frac{r p}{q} - \sqrt{\frac{\pi r p}{8 q^2}} = \frac{1}{2} + \frac{1+p}{6 q} \left[\delta_{\lfloor t \rfloor + \frac{1}{2}}^2 \cdot \{t\} + \delta_{\lfloor t \rfloor - \frac{1}{2}}^2 \cdot \{1 - t\}\right] - \frac{1+p}{6 q} + \OO_p(r^{-1/2}).
        \end{equation}

        \vspace{-1mm}
        Now that we have the asymptotics of both the median and the third quartile (see \eqref{eq:thm:median.negative.binomial} and \eqref{eq:thm:third.quartile.negative.binomial}, respectively), we can generalize the logic from Corollary~\ref{cor:robust.estimator} to create a system of two equations with two unknowns to determine a robust estimator for the pair of parameters $(r,p)$.
        Let $K_1,K_2,\dots,K_n\sim \mathrm{Neg\hspace{0.3mm}Bin}\hspace{0.2mm}(r,p)$ and $U_1,U_2,\dots,U_n\sim \mathrm{Uniform}\hspace{0.2mm}(0,1)$ be i.i.d., and define $X_i \leqdef K_i + U_i$ for all $i\in \{1,2,\dots,n\}$.
        Also, write $\hat{m} = \widehat{\mathrm{Median}}(X_1, X_2, \dots, X_n)$ and $\hat{t} = \widehat{\mathrm{ThirdQuartile}}(X_1, X_2, \dots, X_n)$ for simplicity.
        Define $(\hat{r}_n^{(R)}, \hat{p}_n^{(R)})$ as the pair in $(0,\infty) \times (0,1)$ that solves the following system of equations in the variables $(r,p)$:
        \begin{equation}\label{eq:cor:robust.estimator.two.unknown}
            \begin{aligned}
                &\hat{m} - \frac{r p}{1-p} = \frac{1}{2} - \frac{1 + p}{6 (1-p)}, \\
                &\hat{t} - \frac{r p}{1-p} - \sqrt{\frac{\pi r p}{8 (1-p)^2}} = \frac{1}{2} + \frac{1+p}{6 (1-p)} \left[\delta_{\lfloor \hat{t} \rfloor + \frac{1}{2}}^2 \cdot \{\hat{t}\} + \delta_{\lfloor \hat{t} \rfloor - \frac{1}{2}}^2 \cdot \{1 - \hat{t}\}\right] - \frac{1+p}{6 (1-p)}.
            \end{aligned}
        \end{equation}
        Then $(\hat{r}_n^{(R)}, \hat{p}_n^{(R)}) - (r,p)$ converges in probability to $(0,0)$ as $n\to \infty$ and then $r\to \infty$, using the continuous mapping theorem and the fact that the sample median and sample third quartile converge in probability to their theoretical analogues, respectively.

    \newpage
    \subsection{Le Cam distance bound between negative binomial and normal experiments}\label{sec:LeCam.upper.bound}

        The next theorem (Theorem~\ref{thm:prelim.Carter}) bounds the total variation between a jittered $\mathrm{Neg\hspace{0.3mm}Bin}\hspace{0.2mm}(r, p)$ random variable and a $\mathrm{Normal}\hspace{0.3mm}(r p q^{-1}, r p q^{-2})$ random variable.
        The Le Cam distance bound appears in Theorem~\ref{thm:bound.deficiency.distance} just after.
        A similar approach was used in \cite{MR1922539} and \cite{MR4249129} (who simplified the former proof and improved the rate of convergence) to bound the Le Cam distance between multinomial and multivariate normal experiments.

        For the uninitiated reader, the Le Cam distance is simply the maximum between the deficiencies of the two corresponding experiments, which means that our bound in Theorem~\ref{thm:bound.deficiency.distance} allows us to control the error we make when a statistical conclusion under one model is converted, via an appropriate Markov kernel, to a statistical conclusion under the other model.
        The usefulness of this notion comes from the fact that seemingly completely different statistical experiments can result in asymptotically equivalent inferences using Markov kernels to carry information from one setting to another. For instance, it was famously shown by \citet{MR1425959} that the density estimation problem and the Gaussian white noise problem are asymptotically equivalent in the sense that the Le Cam distance between the two experiments goes to $0$ as the number of observations goes to infinity. The main idea was that the information we get from sampling observations from an unknown density function and counting the observations that fall in the various boxes of a fine partition of the density's support can be encoded using the increments of a properly scaled Brownian motion with drift $t\mapsto \int_0^t \hspace{-1.5mm}\sqrt{f(s)}\hspace{0.5mm}\rd s$, and vice versa.
        An alternative (simpler) proof of this asymptotic equivalence was shown by \citet{MR2102503} who combined a Haar wavelet cascade scheme with coupling inequalities relating the binomial and univariate normal distributions at each step (a similar argument was developed previously by \citet{MR1922539} to derive a multinomial/multivariate coupling inequality).
        Not only \citet{MR2102503} streamlined the proof of the asymptotic equivalence originally shown by \citet{MR1425959}, but their results hold for a larger class of densities and the asymptotic equivalence was also extended to Poisson processes. For an excellent and concise review on Le Cam's theory for the comparison of statistical models, we refer the reader to \cite{MR3850766}.

        \begin{theorem}\label{thm:prelim.Carter}
            For $r > 0$ and $p\in (0,1)$, let $K\sim \mathrm{Neg\hspace{0.3mm}Bin}\hspace{0.2mm}(r, p)$ and $U\sim \mathrm{Uniform}\hspace{0.2mm}(-\tfrac{1}{2},\tfrac{1}{2})$, where $K$ and $U$ are assumed independent.
            Define $X \leqdef K + U$ and let $\widetilde{\PP}_{r,p}$ be the law of $X$.
            In particular, if $\PP_{r,p}$ is the law of $K$, note that
            \begin{equation}
                \widetilde{\PP}_{r,p}(B) = \int_{\N_0} \int_{(-\frac{1}{2},\frac{1}{2})} \ind_{B}(k + u) \rd u \, \PP_{r,p}(\rd k), \quad B\in \mathscr{B}(\R).
            \end{equation}
            Let $\QQ_{r,p}$ be the law of a $\mathrm{Normal}\hspace{0.3mm}(r p q^{-1}, r p q^{-2})$ random variable.
            Then, for $r > 0$ large enough, there exists a constant $C_p > 0$ that depends only on $p$ such that
            \begin{equation}
                \|\widetilde{\PP}_{r,p} - \QQ_{r,p}\| \leq C_p r^{-1/2},
            \end{equation}
            where $\| \cdot \|$ denotes the total variation norm.
        \end{theorem}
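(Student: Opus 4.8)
The plan is to bound the total variation directly at the level of Lebesgue densities. Extend the notation of \eqref{eq:phi.M} by setting $\delta_x \leqdef (x - rpq^{-1})/\sqrt{rpq^{-2}}$ for every $x\in \R$. Then $\QQ_{r,p}$ has density $\varphi_{r,p}(x) \leqdef \frac{q}{\sqrt{rp}}\phi(\delta_x)$, and, since $U$ has a uniform density on an interval of length one centred at the origin, $\widetilde{\PP}_{r,p}$ is absolutely continuous with density $g_{r,p}(x) = P_{r,p}(\lfloor x + \tfrac12\rfloor)\, \ind_{(-1/2,\infty)}(x)$ for almost every $x$. Hence
\[
    \|\widetilde{\PP}_{r,p} - \QQ_{r,p}\| = \int_{\R} \big|g_{r,p}(x) - \varphi_{r,p}(x)\big| \, \rd x .
\]
I would fix $\eta \leqdef \tfrac12$ and split $\R$ into the bulk region $I \leqdef \{x > -\tfrac12 : \lfloor x + \tfrac12\rfloor \in B_{r,p}(\eta)\}$ and its complement $I^c$, noting that $k\in B_{r,p}(\eta)$ is equivalent to $|\delta_k| \le \eta\sqrt{p}\, r^{1/6}$, equivalently $|k - rpq^{-1}| \le \tfrac{\eta p}{q}\, r^{2/3}$.

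On the bulk, for $x\in I$ and $k \leqdef \lfloor x + \tfrac12\rfloor$, the triangle inequality gives $|g_{r,p}(x) - \varphi_{r,p}(x)| \le |P_{r,p}(k) - \tfrac{q}{\sqrt{rp}}\phi(\delta_k)| + \tfrac{q}{\sqrt{rp}}|\phi(\delta_k) - \phi(\delta_x)|$. For the first term, Lemma~\ref{lem:LLT.negative.binomial} shows that, once $\eta$ is fixed and $r\ge 1$, every summand on the right-hand side of \eqref{eq:lem:LLT.negative.binomial.eq} (the remainder included) is $\OO_p((1 + |\delta_k|^9)\, r^{-1/2})$ uniformly on $B_{r,p}(\eta)$, so that $|P_{r,p}(k)/(\tfrac{q}{\sqrt{rp}}\phi(\delta_k)) - 1| \le C_p\,(1 + |\delta_k|^9)\, r^{-1/2}$ there. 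Integrating the first term over $I$ therefore gives $\sum_{k\in B_{r,p}(\eta)} |P_{r,p}(k) - \tfrac{q}{\sqrt{rp}}\phi(\delta_k)| \le C_p\, r^{-1/2}\sum_{k\ge 0} \tfrac{q}{\sqrt{rp}}\phi(\delta_k)(1 + |\delta_k|^9)$; since the $\delta_k$ form an arithmetic progression with step $q/\sqrt{rp}\to 0$, the last sum is a Riemann sum converging to $\int_{\R}\phi(z)(1 + |z|^9)\,\rd z = 1 + \EE|Z|^9 < \infty$ (for $Z$ a standard normal), so this contribution is $\OO_p(r^{-1/2})$. For the second term, substituting $z = \delta_x$ yields $\int_{k-1/2}^{k+1/2} \tfrac{q}{\sqrt{rp}}|\phi(\delta_k) - \phi(\delta_x)|\,\rd x = \int_{\delta_k - \e}^{\delta_k + \e} |\phi(\delta_k) - \phi(z)|\,\rd z$ with $\e \leqdef q/(2\sqrt{rp})$, which by the mean value theorem (and $|\phi'(\zeta)| = |\zeta|\phi(\zeta)$) is at most $2\e^2 \sup_{|\zeta - \delta_k|\le \e}|\zeta|\phi(\zeta)$; summing over $k\ge 0$, bounding $\e\le 1$ for $r$ large, and invoking one more Riemann-sum estimate (with $\e^2 \asymp_p r^{-1}$) again produces $\OO_p(r^{-1/2})$.

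On $I^c$ I would bound the two densities separately, the point being that both are now in their far tails. The Gaussian contribution is immediate: $\int_{I^c}\varphi_{r,p} \le 2\,\Psi\!\big(\tfrac12 \eta\sqrt p\, r^{1/6}\big) \le 2\, e^{-c_p r^{1/3}}$ for $r$ large, by $\Psi(z) \le e^{-z^2/2}$. For the negative binomial contribution, $\int_{I^c} g_{r,p} = \PP\big(K\notin B_{r,p}(\eta)\big) = \PP\big(|K - rpq^{-1}| > \tfrac{\eta p}{q}\, r^{2/3}\big)$, and since the moment generating function $\EE[e^{tK}] = (q/(1 - pe^t))^r$ is finite for every $r > 0$ and $t < \log(1/p)$, a standard Chernoff bound — choosing $t$ of order $r^{-1/3}$ and Taylor-expanding $r\log(q/(1 - pe^t)) - ta$ to second order at $t = 0$, and similarly for the lower tail — gives $\PP\big(|K - rpq^{-1}| > \tfrac{\eta p}{q}\, r^{2/3}\big) \le 2\, e^{-c_p r^{1/3}}$. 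Both tails are thus $\oo_p(r^{-1/2})$, and adding the bulk and tail contributions and absorbing all constants into a single $C_p$ yields $\|\widetilde{\PP}_{r,p} - \QQ_{r,p}\| \le C_p\, r^{-1/2}$ for $r$ large enough.

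The genuinely delicate points I anticipate are the following. First, one must \emph{not} use the pointwise size of the remainder in Lemma~\ref{lem:LLT.negative.binomial} over the whole bulk — it does not vanish uniformly there — but only its $\phi(\delta_k)$-weighted sum, which is finite because Gaussian moments are; this is what converts the $r^{-3/2}$ remainder (and the $r^{-1}$ correction term) into an $\OO_p(r^{-1/2})$ contribution. Second, the negative binomial deviation probability must be controlled by a Chernoff/Bernstein-type estimate valid for arbitrary real $r > 0$ — the explicit moment generating function makes this elementary — since Chebyshev's inequality would only give $\OO_p(r^{-1/3})$, which is too weak to beat the claimed rate. Everything else is routine bookkeeping with Gaussian integrals.
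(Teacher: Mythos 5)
Your proof is correct, but it takes a genuinely different route from the paper's. You bound the total variation directly as the $L^1$ distance between the Lebesgue densities $g_{r,p}$ and $\varphi_{r,p}$, splitting into a bulk (where the multiplicative local limit expansion \eqref{eq:lem:LLT.negative.binomial.eq} plus a Riemann-sum estimate gives $\OO_p(r^{-1/2})$) and a tail (where a Chernoff bound on the negative binomial and a Gaussian tail bound give exponentially small contributions). The paper instead invokes the Hellinger--Kullback-Leibler inequality from page 726 of \cite{MR1922539}, bounding $\|\widetilde{\PP}_{r,p} - \QQ_{r,p}\|$ by the square root of a restricted relative-entropy-type quantity, then estimates $\EE[\log(\rd\widetilde{\PP}_{r,p}/\rd\QQ_{r,p})]$ on the bulk via the logarithmic form \eqref{eq:lem:LLT.negative.binomial.log.eq} together with the central-moment estimates of Lemma~\ref{lem:central.moments.negative.binomial} and Corollary~\ref{cor:central.moments.negative.binomial.on.events}. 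The two routes converge to the same $r^{-1/2}$ rate for structurally different reasons: in the paper's approach the leading odd-order correction in \eqref{eq:lem:LLT.negative.binomial.log.eq} has essentially zero mean under $\PP_{r,p}$, so the expected log-ratio is only $\OO_p(r^{-1})$, and the $r^{-1/2}$ emerges from the outer square root; in your approach, that same odd-order term does \emph{not} cancel in $L^1$ (its $\phi$-weighted absolute integral is of order $r^{-1/2}$), and you correctly identify this as the dominant contribution. Your argument is more elementary and self-contained (no entropy inequality, no moment lemma), while the paper's approach is the one naturally suited to Le Cam theory and the Carter framework being extended; on this particular theorem they are equally sharp. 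The "delicate points" you flag at the end are exactly right: one must integrate the LLT remainder against the Gaussian weight rather than take a crude pointwise supremum over the bulk, and Chebyshev is too weak for the tail.
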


        By inverting the Markov kernel that jitters the negative binomial random variable (which consists in rounding off to the nearest integer), we get an upper bound on the Le Cam distance between negative binomial and Gaussian experiments.

        To understand the notation in Theorem~\ref{thm:bound.deficiency.distance} below, the deficiency $\delta(\mathscr{P},\mathscr{Q})$ in \eqref{eq:def:deficiency.one.sided} gives the smallest distance between the untouched normal measure $\QQ_{r,p}$ and the integrated Markov kernel $\int_{\N_0} T_1(k, \cdot \, ) \, \PP_{r,p}(\rd k)$, the latter representing the negative binomial measure $\PP_{r,p}$ after probabilistic manipulations have been applied (using the Markov kernel $T_1$). You can think of $\delta(\mathscr{P},\mathscr{Q})$ as the quantity of information lost when trying to convert a statistical conclusion from the negative binomial model to the corresponding normal model.
        The deficiency $\delta(\mathscr{Q},\mathscr{P})$ has an analogous interpretation although it is important to note that $\delta(\mathscr{P},\mathscr{Q})$ is not equal to $\delta(\mathscr{Q},\mathscr{P})$ in general. The Le Cam distance is then simply defined as the maximum between the two deficiencies, which means, again, that it measures the maximum amount of information that will be lost when communicating between two models/experiments using the optimal probabilistic manipulations (Markov kernels).

        \begin{theorem}[Bound on the Le Cam distance]\label{thm:bound.deficiency.distance}
            Let $r_0 > 0$ and $p\in (0,1)$ be given, and define the experiments
            \vspace{-1mm}
            \begin{alignat*}{6}
                    &\mathscr{P}
                    &&\leqdef &&~\{\PP_{r,p}\}_{r \geq r_0}, \quad &&\PP_{r,p} ~\text{is the measure induced by } \mathrm{Neg\hspace{0.3mm}Bin}\hspace{0.2mm}(r, p), \\[-0.5mm]
                    &\mathscr{Q}\hspace{-0.5mm}
                    &&\leqdef &&~\{\QQ_{r,p}\}_{r \geq r_0}, \quad &&\QQ_{r,p} ~\text{is the measure induced by } \mathrm{Normal}\hspace{0.3mm}(r p q^{-1}, r p q^{-2}).
            \end{alignat*}
            Then, we have the following bound on the Le Cam distance between $\mathscr{P}$ and $\mathscr{Q}$,
            \begin{equation}\label{eq:thm:bound.deficiency.distance.bound}
                \Delta(\mathscr{P},\mathscr{Q}) \leqdef \max\{\delta(\mathscr{P},\mathscr{Q}),\delta(\mathscr{Q},\mathscr{P})\} \leq C_p r_0^{-1/2},
            \end{equation}
            where $C_p > 0$ is a constant that depends only on $p$,
            \begin{equation}\label{eq:def:deficiency.one.sided}
                \begin{aligned}
                    \delta(\mathscr{P},\mathscr{Q})
                    &\leqdef \inf_{T_1} \sup_{r \geq r_0} \Big\|\int_{\N_0} T_1(k, \cdot \, ) \, \PP_{r,p}(\rd k) - \QQ_{r,p}\Big\|, \\
                    \delta(\mathscr{Q},\mathscr{P})
                    &\leqdef \inf_{T_2} \sup_{r \geq r_0} \Big\|\PP_{r,p} - \int_{\R} T_2(z, \cdot \, ) \, \QQ_{r,p}(\rd z)\Big\|, \\
                \end{aligned}
            \end{equation}
            and the infima are taken, respectively, over all Markov kernels $T_1 : \N_0 \times \mathscr{B}(\R) \to [0,1]$ and $T_2 : \R \times \mathscr{B}(\N_0) \to [0,1]$.
        \end{theorem}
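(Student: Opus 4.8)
\emph{Sketch of the argument.}
The plan is to produce, for each of the two one-sided deficiencies, an \emph{explicit} Markov kernel (independent of $r$) and to bound the resulting total variation uniformly for $r\geq r_0$ by appealing to Theorem~\ref{thm:prelim.Carter}, together with the elementary fact that applying a Markov kernel is a contraction in total variation norm: if $M$ is a Markov kernel between the relevant spaces and $M\mu(\cdot)\leqdef \int M(x,\cdot\,)\,\mu(\rd x)$ denotes its action on a measure $\mu$, then $\|M\mu - M\nu\| \leq \|\mu - \nu\|$ for all finite signed measures $\mu,\nu$ (the data-processing inequality for total variation).

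For $\delta(\mathscr{P},\mathscr{Q})$, I would take $T_1 = J$ to be the \emph{jittering} kernel $J(k,\cdot) = \mathrm{Uniform}\hspace{0.2mm}(k-\tfrac12,\, k+\tfrac12)$ on $\N_0\times \mathscr{B}(\R)$, which is exactly the kernel appearing in Theorem~\ref{thm:prelim.Carter}; thus $J\PP_{r,p} = \widetilde{\PP}_{r,p}$ and, since $r\mapsto r^{-1/2}$ is decreasing,
\[
    \delta(\mathscr{P},\mathscr{Q}) \leq \sup_{r\geq r_0}\big\|\widetilde{\PP}_{r,p} - \QQ_{r,p}\big\| \leq \sup_{r\geq r_0} C_p\, r^{-1/2} = C_p\, r_0^{-1/2}.
\]
For $\delta(\mathscr{Q},\mathscr{P})$, I would take $T_2 = R$ to be the \emph{rounding} kernel that sends $z\in\R$ to the nearest element of $\N_0$, i.e. $R(z,\{k\}) = \ind_{[k-1/2,\,k+1/2)}(z)$ for $k\geq 1$ and $R(z,\{0\}) = \ind_{(-\infty,\,1/2)}(z)$. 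The crucial observation is that rounding is a left inverse of jittering, $R\circ J = \mathrm{id}_{\N_0}$, because for each $k\in\N_0$ the measure $J(k,\cdot)$ is supported on the interval that $R$ collapses to $\{k\}$. Hence $R\widetilde{\PP}_{r,p} = R\,J\,\PP_{r,p} = \PP_{r,p}$, and since $R\QQ_{r,p}(\cdot) = \int_{\R} T_2(z,\cdot\,)\,\QQ_{r,p}(\rd z)$, the contraction property gives
\[
    \Big\|\PP_{r,p} - \int_{\R} T_2(z,\cdot\,)\,\QQ_{r,p}(\rd z)\Big\| = \big\| R\widetilde{\PP}_{r,p} - R\QQ_{r,p}\big\| \leq \big\|\widetilde{\PP}_{r,p} - \QQ_{r,p}\big\| \leq C_p\, r^{-1/2},
\]
whence $\delta(\mathscr{Q},\mathscr{P}) \leq C_p\, r_0^{-1/2}$ after taking the supremum over $r\geq r_0$. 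Taking the maximum of the two bounds yields \eqref{eq:thm:bound.deficiency.distance.bound}.

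There is essentially no analytic difficulty beyond Theorem~\ref{thm:prelim.Carter}: the argument is a soft comparison-of-experiments manipulation, and the entire content is the choice of the jittering and rounding kernels and the identity $R\circ J = \mathrm{id}_{\N_0}$. The two points requiring a word of care are: (i) the ``$r$ large enough'' restriction in Theorem~\ref{thm:prelim.Carter}, which I would dispose of by noting that for $r$ below the threshold $r^{\star}(p)$ the total variation is trivially at most $1 \leq \sqrt{r^{\star}(p)}\, r^{-1/2}$, so after replacing $C_p$ by $\max\{C_p,\sqrt{r^{\star}(p)}\}$ the estimate $\|\widetilde{\PP}_{r,p} - \QQ_{r,p}\| \leq C_p\, r^{-1/2}$ holds for every $r>0$, hence for every $r_0>0$; and (ii) checking that $J$ and $R$ are genuine Markov kernels — the only care being to make $R$ unambiguous by using half-open intervals as above, and to send every $z<-\tfrac12$ to $0$ so that $R(z,\cdot\,)$ is a probability measure on $\N_0$ — with $R\circ J = \mathrm{id}_{\N_0}$ then immediate.
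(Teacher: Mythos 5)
Your proposal is correct and follows essentially the same route as the paper: jitter for $\delta(\mathscr{P},\mathscr{Q})$, round for $\delta(\mathscr{Q},\mathscr{P})$, and use $R\circ J=\mathrm{id}_{\N_0}$ together with the contraction property of Markov kernels in total variation (the paper cites Section~5 of \cite{MR1922539} for the same chain of inequalities you write out explicitly). The only difference is that you also flag and dispose of two small technicalities the paper leaves implicit, namely the ``$r$ large enough'' caveat in Theorem~\ref{thm:prelim.Carter} and the precise definition of the rounding kernel at the boundary.
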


        \begin{proof}[Proof of Theorem~\ref{thm:prelim.Carter}]
            Let $X\sim \widetilde{\PP}_{r,p}$.
            By the comparison of the total variation norm with the Hellinger distance on page 726 of \cite{MR1922539}, we already know that
            \begin{equation}\label{eq:first.bound.total.variation}
                \|\widetilde{\PP}_{r,p} - \QQ_{r,p}\| \leq \sqrt{2 \, \PP(X\in B_{r,p}^c(1/2)) + \EE\bigg[\log\bigg(\frac{\rd \widetilde{\PP}_{r,p}}{\rd \QQ_{r,p}}(X)\bigg) \, \ind_{\{X\in B_{r,p}(1/2)\}}\bigg]},
            \end{equation}
            where $B_{r,p}(1/2)$ is defined in \eqref{eq:bulk}.
            By applying a standard large deviation bound, we have, for $r$ large enough,
            \begin{equation}\label{eq:large.deviation.bound}
                \PP(X\in B_{r,p}^c(1/2)) \leq 100 \, \exp\Big(-\frac{r^{1/3}}{100}\Big).
            \end{equation}
            For the expectation in \eqref{eq:first.bound.total.variation}, if $x \mapsto \widetilde{P}_{r,p}(x)$ denotes the density function associated with $\widetilde{\PP}_{r,p}$ (i.e., it is equal to $P_{r,p}(k)$ whenever $k\in \N_0$ is closest to $x$), then
            \begin{align}\label{eq:I.plus.II.plus.III}
                &\EE\bigg[\log\bigg(\frac{\rd \widetilde{\PP}_{r,p}}{\rd \QQ_{r,p}}(X)\bigg) \, \ind_{\{X\in B_{r,p}(1/2)\}}\bigg] \notag \\
                &\quad=\EE\bigg[\log\bigg(\frac{\widetilde{P}_{r,p}(X)}{\frac{q}{\sqrt{r p}} \phi(\delta_{X})}\bigg) \, \ind_{\{X\in B_{r,p}(1/2)\}}\bigg] \notag \\[1mm]
                &\quad= \EE\bigg[\log\bigg(\frac{P_{r,p}(K)}{\frac{q}{\sqrt{r p}} \phi(\delta_{K})}\bigg) \, \ind_{\{K\in B_{r,p}(1/2)\}}\bigg] + \EE\bigg[\log\bigg(\frac{\frac{q}{\sqrt{r p}} \phi(\delta_{K})}{\frac{q}{\sqrt{r p}} \phi(\delta_{X})}\bigg) \, \ind_{\{K\in B_{r,p}(1/2)\}}\bigg] \notag \\[1mm]
                &\quad\quad+ \EE\bigg[\log\bigg(\frac{P_{r,p}(K)}{\frac{q}{\sqrt{r p}} \phi(\delta_{X})}\bigg) \, (\ind_{\{X\in B_{r,p}(1/2)\}} - \ind_{\{K\in B_{r,p}(1/2)\}})\bigg] \notag \\[1mm]
                &\reqdef (\mathrm{I}) + (\mathrm{II}) + (\mathrm{III}).
            \end{align}
            By Lemma~\ref{lem:LLT.negative.binomial},
            \begin{equation}\label{eq:estimate.I.begin}
                \begin{aligned}
                    (\mathrm{I})
                    &= \frac{1}{\sqrt{r p}}  \EE\left[(1 + p) \left(\frac{1}{6} \frac{(K - r p q^{-1})^3}{(r p q^{-2})^{3/2}} - \frac{1}{2} \frac{K - r p q^{-1}}{(r p q^{-2})^{1/2}}\right) \, \ind_{\{K\in B_{r,p}(1/2)\}}\right] \\[1mm]
                    &\quad+ \OO_p\Bigg(\frac{1}{r} \bigg(\frac{\EE[|K - r p q^{-1}|^4]}{(r p q^{-2})^2} + \frac{\EE[|K - r p q^{-1}|^2]}{r p q^{-2}} + 1\bigg)\Bigg) + \OO_p(r^{-3/2}).
                \end{aligned}
            \end{equation}
            By Lemma~\ref{lem:central.moments.negative.binomial}, the first $\OO_p(\cdot)$ term above is $\OO_p(r^{-1})$.
            By Corollary~\ref{cor:central.moments.negative.binomial.on.events}, we can also control the $\asymp_p r^{-1/2}$ above.
            We obtain
            \begin{equation}\label{eq:estimate.I}
                (\mathrm{I}) = \OO_p(r^{-1}) + \OO_p\big(r^{-1/2} (\PP(K\hspace{-0.5mm}\in\hspace{-0.5mm} B_{r,p}^c(1/2)))^{1/2}\big) = \OO_p(r^{-1}).
            \end{equation}
            For the term $(\mathrm{II})$ in \eqref{eq:I.plus.II.plus.III},
            \begin{align}\label{eq:estimate.II.prelim}
                \log\left(\frac{\frac{q}{\sqrt{r p}} \phi(\delta_{K})}{\frac{q}{\sqrt{r p}} \phi(\delta_{X})}\right)
                &= \frac{(X - r p q^{-1})^2}{2 \, r p q^{-2}} - \frac{(K - r p q^{-1})^2}{2 \, r p q^{-2}} \notag \\
                &= \frac{(X - K)^2}{2 \, r p q^{-2}} + \frac{(X - K) (K - r p q^{-1})}{r p q^{-2}}.
            \end{align}
            With our assumption that $K$ and $X - K = U\sim \mathrm{Uniform}\hspace{0.2mm}(-\tfrac{1}{2},\tfrac{1}{2})$ are independent,
            \begin{align}\label{eq:estimate.II}
                (\mathrm{II})
                &= \frac{1/12}{2 \, r p q^{-2}} - \frac{\EE[(X - K)^2 \, \ind_{\{K\in B_{r,p}^c(1/2)\}}]}{2 \, r p q^{-2}} - \frac{\EE[(X - K) (K - r p q^{-1}) \, \ind_{\{K\in B_{r,p}^c(1/2)\}}]}{r p q^{-2}} \notag \\[1.5mm]
                &= \frac{q}{24 \, r p} + \OO\bigg(\frac{\PP(K\hspace{-0.5mm}\in\hspace{-0.5mm} B_{r,p}^c(1/2))}{r p q^{-2}}\bigg) + \OO\Bigg(\frac{\sqrt{\EE[(K - r p q^{-1})^2]} \, \sqrt{\PP(K\hspace{-0.5mm}\in\hspace{-0.5mm} B_{r,p}^c(1/2))}}{r p q^{-2}}\Bigg) \notag \\[2mm]
                &= \OO_p(r^{-1}).
            \end{align}
            For the term $(\mathrm{III})$ in \eqref{eq:I.plus.II.plus.III}, we have the following crude bound from Lemma~\ref{lem:LLT.negative.binomial} and \eqref{eq:estimate.II.prelim}, on the symmetric difference event $\{X\hspace{-0.5mm}\in\hspace{-0.5mm} B_{r,p}(1/2)\} \hspace{0.3mm}\triangle\hspace{0.3mm} \{K\hspace{-0.5mm}\in\hspace{-0.5mm} B_{r,p}(1/2)\}$,\hspace{0.5mm}%
            \footnote{Note that $\{X\hspace{-0.5mm}\in\hspace{-0.5mm} B_{r,p}(1/2)\} \hspace{0.3mm}\triangle\hspace{0.3mm} \{K\hspace{-0.5mm}\in\hspace{-0.5mm} B_{r,p}(1/2)\} \subseteq \{K\hspace{-0.5mm}\in\hspace{-0.5mm} B_{r,p}(3/4)\}$ assuming that $r$ is large enough, simply because $|X - K| \leq \frac{1}{2}$.}
            \begin{align}
                \log\bigg(\frac{P_{r,p}(K)}{\frac{q}{\sqrt{r p}} \phi(\delta_{X})}\bigg)
                &= \log\bigg(\frac{P_{r,p}(K)}{\frac{q}{\sqrt{r p}} \phi(\delta_{K})}\bigg) + \log\bigg(\frac{\frac{q}{\sqrt{r p}} \phi(\delta_{K})}{\frac{q}{\sqrt{r p}} \phi(\delta_{X})}\bigg) \notag \\
                &= \OO_p\bigg(\frac{|K - r p q^{-1}|^3}{(r p q^{-2})^2} + \frac{|K - r p q^{-1}|}{r p q^{-2}} + \frac{1}{r p q^{-2}}\bigg),
            \end{align}
            which yields, by Cauchy-Schwarz and Lemma~\ref{lem:central.moments.negative.binomial},
            \begin{align}\label{eq:estimate.III}
                (\mathrm{III})
                &= \OO_p\left(\hspace{-1mm}
                    \begin{array}{l}
                        \sqrt{\frac{\EE[|K - r p q^{-1}|^6]}{(r p q^{-2})^4} + \frac{\EE[|K - r p q^{-1}|^2] + 1}{(r p q^{-2})^2}} \\[2mm]
                        \cdot \sqrt{\PP\big(\{X\in B_{r,p}(1/2)\} \triangle \{K\in B_{r,p}(1/2)\}\big)}
                    \end{array}
                    \hspace{-1mm}\right) \notag \\[1mm]
                &= \OO_p\left(r^{-1/2} \, \sqrt{\PP\big(\{X\in B_{r,p}(1/2)\} \triangle \{K\in B_{r,p}(1/2)\}\big)}\right).
            \end{align}
            Putting \eqref{eq:estimate.I}, \eqref{eq:estimate.II} and \eqref{eq:estimate.III} in \eqref{eq:I.plus.II.plus.III}, together with the exponential bound
            \begin{align}
                &\PP\big(\{X\in B_{r,p}(1/2)\} \triangle \{K\in B_{r,p}(1/2)\}\big) \notag \\[2.5mm]
                &\qquad\leq \PP(K\hspace{-0.5mm}\in\hspace{-0.5mm} B_{r,p}^c(1/2)) + \PP(X\in B_{r,p}^c(1/2)) \notag \\[-0.5mm]
                &\qquad\leq 2 \cdot 100 \, \exp\Big(-\frac{r^{1/3}}{100}\Big),
            \end{align}
            yields, as $r\to \infty$,
            \begin{equation}\label{eq:I.plus.II.plus.III.end}
                \EE\bigg[\log\bigg(\frac{\rd \widetilde{\PP}_{r,p}}{\rd \QQ_{r,p}}(X)\bigg) \, \ind_{\{X\in B_{r,p}(1/2)\}}\bigg] = (\mathrm{I}) + (\mathrm{II}) + (\mathrm{III}) = \OO_p(r^{-1}).
            \end{equation}
            Now, putting \eqref{eq:large.deviation.bound} and \eqref{eq:I.plus.II.plus.III.end} together in \eqref{eq:first.bound.total.variation} gives the conclusion.
        \end{proof}

        \begin{proof}[Proof of Theorem~\ref{thm:bound.deficiency.distance}]
            By Theorem~\ref{thm:prelim.Carter}, we get the desired bound on $\delta(\mathscr{P},\mathscr{Q})$ by choosing the Markov kernel $T_1^{\star}$ that adds $U$ to $K$, namely
            \begin{equation}
                \begin{aligned}
                    T_1^{\star}(k,B) \leqdef \int_{(-\frac{1}{2},\frac{1}{2})} \ind_{B}(k + u) \rd u, \quad k\in \N_0, ~B\in \mathscr{B}(\R).
                \end{aligned}
            \end{equation}
            To get the bound on $\delta(\mathscr{Q},\mathscr{P})$, it suffices to consider a Markov kernel $T_2^{\star}$ that inverts the effect of $T_1^{\star}$, i.e., rounding off $Z\sim \mathrm{Normal}\hspace{0.3mm}(r p q^{-1}, r p q^{-2})$ to the nearest integer.
            Then, as explained in Section~5 of \cite{MR1922539}, we get
            \begin{align}
                \delta(\mathscr{Q},\mathscr{P})
                &\leq \sup_{r\geq r_0} \Big\|\PP_{r,p} - \int_{\R} T_2^{\star}(z, \cdot \, ) \, \QQ_{r,p}(\rd z)\Big\| \notag \\
                &= \sup_{r\geq r_0} \Big\|\int_{\R} T_2^{\star}(z, \cdot \, ) \int_{\N_0} T_1^{\star}(k, \rd z) \, \PP_{r,p}(\rd k) - \int_{\R} T_2^{\star}(z, \cdot \, ) \, \QQ_{r,p}(\rd z)\Big\| \notag \\
                &\leq \sup_{r\geq r_0} \Big\|\int_{\N_0} T_1^{\star}(k, \cdot \, ) \, \PP_{r,p}(\rd k) - \QQ_{r,p}\Big\|,
            \end{align}
            and we get the same bound by Theorem~\ref{thm:prelim.Carter}.
        \end{proof}

\begin{appendices}

\section{Proof of the refined continuity correction}\label{sec:refined.continuity.correction}

    \begin{proof}[Proof of Lemma~\ref{lem:LLT.negative.binomial}]
        By taking the logarithm in \eqref{eq:negative.binomial.pmf}, we have
        \begin{equation}
            \begin{aligned}
                \log\big(P_{r,p}(k)\big)
                &= \log \Gamma(r + k) - \log \Gamma(r) - \log k! + r \log q + k \log p.
            \end{aligned}
        \end{equation}
        Stirling's formula yields
        \begin{equation}
            \begin{aligned}
                \log \Gamma(x) &= \frac{1}{2} \log (2\pi) + (x - \tfrac{1}{2}) \log x - x + \frac{1}{12 x} + \OO(x^{-3}), \quad x > 0, \\[0.5mm]
                \log k! &= \frac{1}{2} \log (2\pi) + (k + \tfrac{1}{2}) \log k - k + \frac{1}{12 k} + \OO(k^{-3}), \quad k\in \N,
            \end{aligned}
        \end{equation}
        see, e.g., \cite{MR0167642}, p.257.
        Hence, we get
        \begin{align}\label{eq:big.equation}
            \log\big(P_{r,p}(k)\big)
            &= - \frac{1}{2} \log (2\pi) + (r + k) \log (r + k) - r \log (r) - k \log k \notag \\[0.5mm]
            &\quad- \frac{1}{2} \log (r + k) + \frac{1}{2} \log (r) - \frac{1}{2} \log k + r \log q + k \log p \notag \\[0.5mm]
            &\quad+ \frac{1}{12} \big[(r + k)^{-1} - r^{-1} - k^{-1}\big] + \OO\big((r + k)^{-3} + r^{-3} + k^{-3}\big).
        \end{align}
        By writing
        \begin{equation}
            r + k = \frac{r}{q} \Big(1 + \frac{\delta_k}{\sqrt{r p^{-1}}}\Big) \quad \text{and} \quad k = \frac{r p}{q} \Big(1 + \frac{\delta_k}{\sqrt{r p}}\Big),
        \end{equation}
        the above is
        \begin{align}\label{eq:big.equation.next}
            \log\big(P_{r,p}(k)\big)
            &= - \frac{1}{2} \log (2 \pi) - \frac{1}{2} \log \Big(\frac{r p}{q^2}\Big) \notag \\
            &\quad+ \frac{r}{q} \bigg(1 + \frac{\delta_k}{\sqrt{r p^{-1}}}\bigg) \log \bigg(1 + \frac{\delta_k}{\sqrt{r p^{-1}}}\bigg) \notag \\
            &\quad- \frac{r p}{q} \bigg(1 + \frac{\delta_k}{\sqrt{r p}}\bigg) \log \bigg(1 + \frac{\delta_k}{\sqrt{r p}}\bigg) \notag \\
            &\quad- \frac{1}{2} \log \bigg(1 + \frac{\delta_k}{\sqrt{r p^{-1}}}\bigg) - \frac{1}{2} \log \bigg(1 + \frac{\delta_k}{\sqrt{r p}}\bigg) \notag \\
            &\quad+ \frac{q}{12 r} \bigg[\bigg(1 + \frac{\delta_k}{\sqrt{r p^{-1}}}\bigg)^{\hspace{-1mm}-1} \hspace{-1mm}- \frac{1}{q} - \frac{1}{p} \bigg(1 + \frac{\delta_k}{\sqrt{r p}}\bigg)^{\hspace{-1mm}-1}\bigg] \notag \\
            &\quad+ \OO\bigg(\frac{q^3}{r^3} \bigg[\bigg(1 + \frac{\delta_k}{\sqrt{r p^{-1}}}\bigg)^{\hspace{-1mm}-3} \hspace{-1mm}+ \frac{1}{q^3} + \frac{1}{p^3} \bigg(1 + \frac{\delta_k}{\sqrt{r p}}\bigg)^{\hspace{-1mm}-3}\bigg]\bigg).
        \end{align}
        Now, note that for $y \geq \eta - 1$, Lagrange's error bound for Taylor expansions yields
        \begin{equation}
            \begin{aligned}
                (1 + y) \log (1 + y) &= y + \frac{y^2}{2} - \frac{y^3}{6} + \frac{y^4}{12} + \OO\bigg(\frac{y^5}{\eta^4}\bigg), \\
                \log (1 + y) &= y - \frac{y^2}{2} + \OO\bigg(\frac{y^3}{\eta^3}\bigg), \\
                (1 + y)^{-1} &= 1 + \OO\bigg(\frac{y}{\eta^2}\bigg).
            \end{aligned}
        \end{equation}
        By applying these approximations in \eqref{eq:big.equation.next}, we obtain
        \begin{align}\label{eq:big.equation.2}
            &\log\big(P_{r,p}(k)\big) \notag \\
            &= - \frac{1}{2} \log \Big(2 \pi \frac{r p}{q^2}\Big) \notag \\
            &+ \frac{r}{q} \left\{\frac{\delta_k}{\sqrt{r p^{-1}}} + \frac{1}{2} \Big(\frac{\delta_k}{\sqrt{r p^{-1}}}\Big)^2 - \frac{1}{6} \Big(\frac{\delta_k}{\sqrt{r p^{-1}}}\Big)^3 + \frac{1}{12} \Big(\frac{\delta_k}{\sqrt{r p^{-1}}}\Big)^4 + \OO\Big(\frac{1}{\eta^4} \Big(\frac{\delta_k}{\sqrt{r p^{-1}}}\Big)^5\Big)\right\} \notag \\
            &- \frac{r p}{q} \left\{\frac{\delta_k}{\sqrt{r p}} + \frac{1}{2} \Big(\frac{\delta_k}{\sqrt{r p}}\Big)^2 - \frac{1}{6} \Big(\frac{\delta_k}{\sqrt{r p}}\Big)^3 + \frac{1}{12} \Big(\frac{\delta_k}{\sqrt{r p}}\Big)^4 + \OO\Big(\frac{1}{\eta^4} \Big(\frac{\delta_k}{\sqrt{r p}}\Big)^5\Big)\right\} \notag \\
            &- \frac{1}{2} \left\{\frac{\delta_k}{\sqrt{r p^{-1}}} - \frac{1}{2} \bigg(\frac{\delta_k}{\sqrt{r p^{-1}}}\bigg)^2 + \OO\bigg(\frac{1}{\eta^3} \bigg(\frac{\delta_k}{\sqrt{r p^{-1}}}\bigg)^3\bigg)\right\} \notag \\
            &- \frac{1}{2} \left\{\frac{\delta_k}{\sqrt{r p}} - \frac{1}{2} \bigg(\frac{\delta_k}{\sqrt{r p}}\bigg)^2 + \OO\bigg(\frac{1}{\eta^3} \bigg(\frac{\delta_k}{\sqrt{r p}}\bigg)^3\bigg)\right\} \notag \\
            &+ \frac{q}{12 r} \left\{1 - q^{-1} - p^{-1}\right\} + \OO\bigg(\frac{\delta_k}{r^{3/2} \eta^2}\bigg) + \OO_p\bigg(\frac{1}{r^3 \eta^3}\bigg).
        \end{align}
        After some cancellations, we get
        \begin{align}\label{eq:big.equation.3}
            \log\bigg(\frac{P_{r,p}(k)}{\frac{q}{\sqrt{r p}} \phi(\delta_k)}\bigg)
            &= \left\{- \frac{p^2}{6 q \sqrt{p}} \frac{\delta_k^3}{r^{1/2}} + \frac{p^2}{12 q} \frac{\delta_k^4}{r} + \OO_p\bigg(\frac{\delta_k^5}{r^{3/2} \eta^4}\bigg)\right\} \notag \\[-0.5mm]
            &\quad+ \left\{\frac{1}{6 q \sqrt{p}} \frac{\delta_k^3}{r^{1/2}} - \frac{1}{12 p q} \frac{\delta_k^4}{r} + \OO_p\bigg(\frac{\delta_k^5}{r^{3/2} \eta^4}\bigg)\right\} \notag \\[-0.5mm]
            &\quad- \frac{1}{2} \left\{\frac{\delta_k}{\sqrt{r p^{-1}}} - \frac{1}{2} \frac{\delta_k^2}{r p^{-1}} + \OO_p\bigg(\frac{\delta_k^3}{r^{3/2} \eta^3}\bigg)\right\} \notag \\
            &\quad- \frac{1}{2} \left\{\frac{\delta_k}{\sqrt{r p}} - \frac{1}{2} \frac{\delta_k^2}{r p} + \OO_p\bigg(\frac{\delta_k^3}{r^{3/2} \eta^3}\bigg)\right\} \notag \\
            &\quad- \frac{p^2 + q}{12 r p} + \OO_p\bigg(\frac{1 + |\delta_k|}{r^{3/2} \eta^3}\bigg) \notag \\[2mm]
            &= (r p)^{-1/2} \left\{\frac{1 + p}{6} \delta_k^3 - \frac{1 + p}{2} \delta_k\right\} \notag \\
            &\quad+ (r p)^{-1} \left\{- \frac{1 + p + p^2}{12} \delta_k^4 + \frac{p^2 + 1}{4} \delta_k^2 - \frac{p^2 + q}{12}\right\} \notag \\
            &\quad+ \OO_p\bigg(\frac{1 + |\delta_k|^5}{r^{3/2} \eta^4}\bigg),
        \end{align}
        which proves \eqref{eq:lem:LLT.negative.binomial.log.eq}.
        To obtain \eqref{eq:lem:LLT.negative.binomial.eq} and conclude the proof, we take the exponential on both sides of the last equation and we expand the right-hand side with
        \begin{equation}\label{eq:Taylor.exponential}
            e^y = 1 + y + \frac{y^2}{2} + \OO(e^{\widetilde{\eta}} y^3), \quad \text{for } -\infty < y \leq \widetilde{\eta}.
        \end{equation}
        For $r$ large enough and uniformly for $|\delta_k| \leq \eta \, r^{1/6} p^{1/2}$, the right-hand side of \eqref{eq:big.equation.3} is $\OO_p(1)$.
        When this bound is taken as $y$ in \eqref{eq:Taylor.exponential}, it explains the error in \eqref{eq:lem:LLT.negative.binomial.eq}.
    \end{proof}

    \begin{proof}[Proof of Theorem~\ref{thm:main.result}]
        Let $c\in \R$.
        Note that \eqref{eq:thm:main.result.eq.2.reflect} is a trivial consequence of \eqref{eq:thm:main.result.eq.1}, so we only need to prove \eqref{eq:thm:main.result.eq.1}.
        By decomposing $[\delta_{a - c},\infty)$ into small intervals, we get
        \begin{equation}\label{eq:Cressie.generalization.eq.3}
            \begin{aligned}
                \sum_{k=a}^{\infty} P_{r,p}(k) - \int_{\delta_{a - c}}^{\infty} \hspace{-1mm} \phi(y) \rd y
                &~= \hspace{-2mm} \sum_{\substack{k=a \\ k\in B_{r,p}(1/2)}}^{\infty} \hspace{-2mm}\Big[P_{r,p}(k) - \int_{\delta_{a-\frac{1}{2}}}^{\delta_{a+\frac{1}{2}}} \phi(y) \rd y\Big] \\[-0.5mm]
                &~\qquad\qquad- \int_{\delta_{a-c}}^{\delta_{a-\frac{1}{2}}} \phi(y) \rd y + \OO(e^{-\beta r}),
            \end{aligned}
        \end{equation}
        for a small enough constant $\beta = \beta(p) > 0$, where the exponential error comes from the contributions outside of the bulk.
        The Taylor expansion of $\phi(x)$ around any $x_0\in \R$ is
        \begin{equation}\label{eq:Taylor.phi.Sigma}
            \begin{aligned}
                \phi(x)
                &= \phi(x_0) + \phi'(x_0) (x - x_0) + \tfrac{1}{2} \phi''(x_0) (x - x_0)^2 \\[1mm]
                &\quad+ \OO(|x - x_0|^3).
            \end{aligned}
        \end{equation}
        By taking $x_0 = \delta_{k}$ in \eqref{eq:Taylor.phi.Sigma} and integrating on $[\delta_{k-\frac{1}{2}}, \delta_{k+\frac{1}{2}}]$, the first and third order derivatives disappear because of the symmetry.
        We have
        \begin{align}\label{eq:Cressie.generalization.eq.1}
            \int_{\delta_{k-\frac{1}{2}}}^{\delta_{k+\frac{1}{2}}} \phi(y) \rd y
            &= \frac{q}{\sqrt{r p}} \phi(\delta_{k}) + \frac{\phi''(\delta_{k})}{2} \int_{-q/(2\sqrt{r p})}^{q/(2\sqrt{r p})} x^2 \rd x + \OO_p\bigg(\frac{1 + |\delta_k|^5}{r^{5/2}}\bigg) \notag \\
            &= \frac{q}{\sqrt{r p}} \phi(\delta_{k}) \bigg\{1 + \frac{q^2}{24 r p}(\delta_k^2 - 1) + \OO_p\bigg(\frac{1 + |\delta_k|^5}{r^2}\bigg)\bigg\}.
        \end{align}
        Similarly, by taking $x_0 = \delta_{k}$ in \eqref{eq:Taylor.phi.Sigma} and integrating on $[\delta_{a - c}, \delta_{a - \frac{1}{2}}]$, we have
        \begin{equation}\label{eq:Cressie.generalization.eq.2}
            \int_{\delta_{a-c}}^{\delta_{a-\frac{1}{2}}} \phi(y) \rd y = \frac{q}{\sqrt{r p}} \phi(\delta_{a}) \left\{(c - \frac{1}{2}) + \frac{q}{\sqrt{r p}} \frac{1}{2} (c^2 - \frac{1}{4}) + \OO_p\Big(\frac{1 + |\delta_a|^2}{r}\Big)\right\}.
        \end{equation}

        Using \eqref{eq:Cressie.generalization.eq.1}, \eqref{eq:Cressie.generalization.eq.2}, and the expression of $P_{r,p}(k)$ from Lemma~\ref{lem:LLT.negative.binomial} when $k$ is in the bulk, the right-hand side of \eqref{eq:Cressie.generalization.eq.3} is equal to
        \begin{equation}\label{eq:Cressie.generalization.eq.4}
            \begin{aligned}
                &(r p)^{-1/2} \left\{\hspace{-1mm}
                \begin{array}{l}
                    \frac{1 + p}{6} \int_{\{y \geq \delta_{\widetilde{a}}\}} y^3 \phi(y) \rd y - \frac{1 + p}{2} \int_{\{y \geq \delta_{\widetilde{a}}\}} y \, \phi(y) \rd y - q (c - \frac{1}{2}) \phi(\delta_{\widetilde{a}})
                \end{array}
                \hspace{-1mm}\right\} \\[1mm]
                + ~&(r p)^{-1} \left\{\hspace{-1mm}
                \begin{array}{l}
                    \frac{(1 + p)^2}{72 p} \int_{\{y \geq \delta_{\widetilde{a}}\}} y^6 \phi(y) \rd y - \frac{2 + 3 p + 2 p^2}{12} \int_{\{y \geq \delta_{\widetilde{a}}\}} y^4 \phi(y) \rd y \\[1.5mm]
                    + \frac{3 + 2 p + 3 p^2}{8} \int_{\{y \geq \delta_{\widetilde{a}}\}} y^2 \phi(y) \rd y - \frac{p^2 + q}{12} \int_{\{y \geq \delta_{\widetilde{a}}\}} \phi(y) \rd y \\[1mm]
                    - \frac{q^2}{24} \int_{\{y \geq \delta_{\widetilde{a}}\}} (y^2 - 1) \phi(y) \rd y \\[1mm]
                    + \tfrac{q}{2} (c - \frac{1}{2}) \delta_{\widetilde{a}} \phi(\delta_{\widetilde{a}}) - \tfrac{q^2}{2} (c^2 - \frac{1}{4}) \delta_{\widetilde{a}} \phi(\delta_{\widetilde{a}})
                \end{array}
                \hspace{-1mm}\right\} + \OO_p(r^{-3/2}),
            \end{aligned}
        \end{equation}
        where $\widetilde{a} \leqdef a - \tfrac{1}{2}$.
        For $d\in \R$, consider
        \begin{equation}\label{eq:error.terms.brace.2}
            c = \frac{1}{2} + \bigg[\frac{1 + p}{6 q} \cdot \frac{\int_{\{y \geq \delta_{\widetilde{a}}\}} y^3 \phi(y) \rd y}{\phi(\delta_{\widetilde{a}})} - \frac{1 + p}{2 q} \cdot \frac{\int_{\{y \geq \delta_{\widetilde{a}}\}} y \, \phi(y) \rd y}{\phi(\delta_{\widetilde{a}})}\bigg] + \frac{d}{q \sqrt{r p}},
        \end{equation}
        in \eqref{eq:Cressie.generalization.eq.4}.
        The terms of order $(r p)^{-1/2}$ cancel out and the $d$ that cancels the terms of order $(r p)^{-1}$ is
        \begin{align}\label{eq:error.terms.brace.2.next}
            d_{r,p}^{\star}(a) =
            \left\{\hspace{-1mm}
            \begin{array}{l}
                \frac{(1 + p)^2}{72} \int_{\{y \geq \delta_{\widetilde{a}}\}} y^6 \phi(y) \rd y - \frac{2 + 3 p + 2 p^2}{12} \int_{\{y \geq \delta_{\widetilde{a}}\}} y^4 \phi(y) \rd y \\[1.5mm]
                + \frac{3 + 2 p + 3 p^2}{8} \int_{\{y \geq \delta_{\widetilde{a}}\}} y^2 \phi(y) \rd y - \frac{p^2 + q}{12} \int_{\{y \geq \delta_{\widetilde{a}}\}} \phi(y) \rd y \\[1mm]
                - \frac{q^2}{24} \int_{\{y \geq \delta_{\widetilde{a}}\}} (y^2 - 1) \phi(y) \rd y \\[1mm]
                + \frac{p}{2} \Big[\frac{1 + p}{6} \cdot \frac{\int_{\{y \geq \delta_{\widetilde{a}}\}} y^3 \phi(y) \rd y}{\phi(\delta_{\widetilde{a}})} - \frac{1 + p}{2} \cdot \frac{\int_{\{y \geq \delta_{\widetilde{a}}\}} y \, \phi(y) \rd y}{\phi(\delta_{\widetilde{a}})}\Big] \delta_{\widetilde{a}} \phi(\delta_{\widetilde{a}}) \\
                - \tfrac{1}{2} \Big[\frac{1 + p}{6} \cdot \frac{\int_{\{y \geq \delta_{\widetilde{a}}\}} y^3 \phi(y) \rd y}{\phi(\delta_{\widetilde{a}})} - \frac{1 + p}{2} \cdot \frac{\int_{\{y \geq \delta_{\widetilde{a}}\}} y \, \phi(y) \rd y}{\phi(\delta_{\widetilde{a}})}\Big]^2 \delta_{\widetilde{a}} \phi(\delta_{\widetilde{a}})
            \end{array}
            \hspace{-1mm}\right\} \frac{1}{\phi(\delta_{\widetilde{a}})}.
        \end{align}
        Now, using the fact that, for $a\in \R$,
        \begin{equation}
            \begin{aligned}
                &\int_{\{y \geq \delta_{\widetilde{a}}\}} y^6 \phi(y) \rd y = (15 \delta_{\widetilde{a}} + 5 \delta_{\widetilde{a}}^3 + \delta_{\widetilde{a}}^5) \phi(\delta_{\widetilde{a}}) + 15 \Psi(\delta_{\widetilde{a}}), \\
                &\int_{\{y \geq \delta_{\widetilde{a}}\}} y^4 \phi(y) \rd y = (3 \delta_{\widetilde{a}} + \delta_{\widetilde{a}}^3) \phi(\delta_{\widetilde{a}}) + 3 \Psi(\delta_{\widetilde{a}}), \\
                &\int_{\{y \geq \delta_{\widetilde{a}}\}} y^3 \phi(y) \rd y = (2 + \delta_{\widetilde{a}}^2) \phi(\delta_{\widetilde{a}}), \\
                &\int_{\{y \geq \delta_{\widetilde{a}}\}} y^2 \phi(y) \rd y = \delta_{\widetilde{a}} \phi(\delta_{\widetilde{a}}) + \Psi(\delta_{\widetilde{a}}), \\
                &\int_{\{y \geq \delta_{\widetilde{a}}\}} y \phi(y) \rd y = \phi(\delta_{\widetilde{a}}),
            \end{aligned}
        \end{equation}
        where $\Psi$ denotes the survival function of the standard normal distribution, the $c$ that cancel both braces in \eqref{eq:Cressie.generalization.eq.4} is
        \begin{equation}
            \begin{aligned}
                c_{r,p}^{\star}(a)
                &= \frac{1}{2} + \bigg[\frac{1 + p}{6 q} \cdot \frac{\int_{\{y \geq \delta_{\widetilde{a}}\}} y^3 \phi(y) \rd y}{\phi(\delta_{\widetilde{a}})} - \frac{1 + p}{2 q} \cdot \frac{\int_{\{y \geq \delta_{\widetilde{a}}\}} y \, \phi(y) \rd y}{\phi(\delta_{\widetilde{a}})}\bigg] + \frac{d_{r,p}^{\star}(a)}{q \sqrt{r p}} \\[1mm]
                &= \frac{1}{2} + \frac{1+p}{6 q} \big[\delta_{\widetilde{a}}^2 - 1\big] + \frac{1}{q \sqrt{r p}} \left\{\hspace{-1mm}
                    \begin{array}{l}
                        \left[\hspace{-1mm}
                            \begin{array}{l}
                                \frac{(1 + p)^2}{72} \cdot 5 - \frac{2 + 3 p + 2 p^2}{12} \\
                                + \frac{p(1 + p)}{12} \cdot (1 - 3) \\
                                + \frac{(1 + p)^2}{72} \cdot (-4 + 6)
                            \end{array}
                            \hspace{-1mm}\right] \delta_{\widetilde{a}}^3 \\[7mm]
                        \left[\hspace{-1mm}
                            \begin{array}{l}
                                \frac{(1 + p)^2}{72} \cdot 15 - \frac{2 + 3 p + 2 p^2}{12} \cdot 3 \\
                                + \frac{3 + 2 p + 3 p^2}{8} - \frac{q^2}{24} \\
                                + \frac{p(1 + p)}{12} \cdot (2 - 3) \\
                                + \frac{(1 + p)^2}{72} \cdot (-4 + 6 \cdot 2 - 9)
                            \end{array}
                            \hspace{-1mm}\right] \delta_{\widetilde{a}}
                    \end{array}
                    \hspace{-1mm}\right\} \\[1mm]
                &= \frac{1}{2} + \frac{1+p}{6 q} \big[\delta_{\widetilde{a}}^2 - 1\big] + \frac{1}{q \sqrt{r p}} \left\{\hspace{-1mm}
                    \begin{array}{l}
                        - \frac{1}{72} \big[5 + 16 p + 17 p^2\big] \delta_{\widetilde{a}}^3 \\[1mm]
                        + \frac{1}{36} \big[1 - 4 p - 2 p^2\big] \delta_{\widetilde{a}}
                    \end{array}
                    \hspace{-1mm}\right\}.
            \end{aligned}
        \end{equation}
        This ends the proof.
    \end{proof}

\section{Moments of the negative binomial distribution}\label{sec:negative.binomial.moments}

    In the lemma below, we compute the second, third, fourth and sixth central moments. It is used to control some expectations in \eqref{eq:estimate.III} and the $\asymp_p r^{-1}$ errors in \eqref{eq:estimate.I.begin} of the proof of Theorem~\ref{thm:prelim.Carter}.
    It is also a preliminary result for the proof of Corollary~\ref{cor:central.moments.negative.binomial.on.events} below, where the central moments are bounded on various events.

    \begin{lemma}[Central moments]\label{lem:central.moments.negative.binomial}
        Let $K\sim \mathrm{Neg\hspace{0.3mm}Bin}\hspace{0.2mm}(r, p)$ for some $r > 0$ and $p\in (0,1)$.
        We have
        \vspace{0.5mm}
        \begin{equation}\label{eq:lem:central.moments.negative.binomial.examples}
            \begin{aligned}
                &\EE[(K - r p q^{-1})^2] = r p q^{-2}, \\[0.5mm]
                &\EE[(K - r p q^{-1})^3] = r p q^{-2} \cdot (1 + p) q^{-1}, \\[0.5mm]
                &\EE[(K - r p q^{-1})^4] = 3 (r p q^{-2})^2 + \OO_p(r), \\[0.5mm]
                &\EE[(K - r p q^{-1})^6] = 15 (r p q^{-2})^3 + \OO_p(r^2).
            \end{aligned}
        \end{equation}
    \end{lemma}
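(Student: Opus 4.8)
The plan is to route everything through the cumulant generating function, which for this parametrization is particularly clean. Summing the pmf \eqref{eq:negative.binomial.pmf} against $z^k$ gives the probability generating function $\EE[z^K] = q^r (1 - pz)^{-r} = (q/(1-pz))^r$, hence the moment generating function $M(t) = (q/(1-pe^t))^r$ (valid for $pe^t < 1$, in particular in a neighbourhood of $t = 0$) and the cumulant generating function
\begin{equation*}
    \mathcal{K}(t) \leqdef \log M(t) = r\big[\log q - \log(1 - pe^t)\big].
\end{equation*}
The structural observation that drives the whole proof is that $\mathcal{K}(t) = r \cdot g(t,p)$ for a function $g$ depending only on $t$ and $p$; therefore every cumulant $\kappa_n \leqdef \mathcal{K}^{(n)}(0)$ has the form $\kappa_n = r \cdot c_n(p)$ with $c_n$ depending only on $p$, so that $\kappa_n = \OO_p(r)$ for each fixed $n$. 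This single fact is exactly what produces the error terms in the statement.

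Next I would differentiate. Writing $u = pe^t$ (so that $u' = u$ and $u|_{t=0} = p$), one gets $\mathcal{K}'(t) = r\,u/(1-u)$, then $\mathcal{K}''(t) = r\,u/(1-u)^2$, then $\mathcal{K}'''(t) = r\,u(1+u)/(1-u)^3$; evaluating at $t = 0$ yields $\kappa_1 = rpq^{-1}$, $\kappa_2 = rpq^{-2}$, and $\kappa_3 = rp(1+p)q^{-3} = rpq^{-2}\cdot(1+p)q^{-1}$. In particular $\kappa_1$ is the mean, so $K - rpq^{-1}$ is the centered variable, and the central moments $\mu_n \leqdef \EE[(K - rpq^{-1})^n]$ are precisely the ones delivered by the standard moment–cumulant formulas with $\kappa_1$ set to $0$:
\begin{equation*}
    \mu_2 = \kappa_2, \qquad \mu_3 = \kappa_3, \qquad \mu_4 = \kappa_4 + 3\kappa_2^2, \qquad \mu_6 = \kappa_6 + 15\kappa_4\kappa_2 + 10\kappa_3^2 + 15\kappa_2^3.
\end{equation*}

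The first two give $\EE[(K - rpq^{-1})^2] = rpq^{-2}$ and $\EE[(K - rpq^{-1})^3] = rpq^{-2}(1+p)q^{-1}$ exactly. For the fourth moment, $3\kappa_2^2 = 3(rpq^{-2})^2$ is the leading term and $\kappa_4 = \OO_p(r)$, so $\EE[(K - rpq^{-1})^4] = 3(rpq^{-2})^2 + \OO_p(r)$. For the sixth moment, the leading term is $15\kappa_2^3 = 15(rpq^{-2})^3$ and each remaining contribution is $\OO_p(r^2)$: indeed $\kappa_6 = \OO_p(r)$, while $\kappa_4\kappa_2 = \OO_p(r)\cdot\OO_p(r)$ and $\kappa_3^2 = \OO_p(r^2)$ since $\kappa_3 = \OO_p(r)$; hence $\EE[(K - rpq^{-1})^6] = 15(rpq^{-2})^3 + \OO_p(r^2)$.

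There is no genuine obstacle here; the computation is routine once the generating function is in hand. If I had to single out the one place to be careful, it is the bookkeeping in the moment–cumulant formula for $\mu_6$: one must correctly recognize that the cross terms $\kappa_4\kappa_2$ and $\kappa_3^2$ are both only $\OO_p(r^2)$ and not larger, which again rests entirely on the linear-in-$r$ scaling of every cumulant. An alternative route — computing the factorial moments $\EE[K(K-1)\cdots(K-j+1)] = \frac{\Gamma(r+j)}{\Gamma(r)}(p/q)^j$ directly from the generating function and converting to central moments — would also work, but the conversion is messier and obscures why the error orders come out as stated, so I would favour the cumulant approach.
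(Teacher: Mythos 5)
Your derivation is correct and, unlike the paper's, is an actual proof: the paper's ``proof'' of Lemma~\ref{lem:central.moments.negative.binomial} consists of the single sentence ``This was calculated using \texttt{Mathematica}.'' Your cumulant route is genuinely different and strictly more informative. The probability generating function $\EE[z^K]=q^r(1-pz)^{-r}$ and the resulting cumulant generating function $\mathcal{K}(t)=r[\log q-\log(1-pe^t)]$ are correct, the differentiation trick $u=pe^t$ giving $\kappa_1=rpq^{-1}$, $\kappa_2=rpq^{-2}$, $\kappa_3=rp(1+p)q^{-3}$ checks out, and the moment--cumulant identities $\mu_4=\kappa_4+3\kappa_2^2$ and $\mu_6=\kappa_6+15\kappa_4\kappa_2+10\kappa_3^2+15\kappa_2^3$ (for centered $\kappa_1=0$, counting partitions without singleton blocks) are exactly right. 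The key structural observation --- that $\mathcal{K}$ is linear in $r$ and hence every cumulant is $\OO_p(r)$ --- is precisely what makes the error orders $\OO_p(r)$ and $\OO_p(r^2)$ transparent rather than accidents of a symbolic computation. The only thing the paper's computer-algebra route ``buys'' is not having to recall the moment--cumulant dictionary; your approach buys a self-contained, checkable argument and an explanation of where the subleading orders come from, which is clearly preferable in a proof.
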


    \begin{proof}[Proof of Lemma~\ref{lem:central.moments.negative.binomial}]
        This was calculated using \texttt{Mathematica}.
    \end{proof}

    Next, we bound the first and third central moments on various events.
    The corollary below is used to control the $\asymp_p r^{-1/2}$ errors in \eqref{eq:estimate.I.begin} of the proof of Theorem~\ref{thm:prelim.Carter}.

    \begin{corollary}\label{cor:central.moments.negative.binomial.on.events}
        Let $K\sim \mathrm{Neg\hspace{0.3mm}Bin}\hspace{0.2mm}(r, p)$ for some $r > 0$ and $p\in (0,1)$, and let $A\in \mathscr{B}(\R)$ be a Borel set.
        Then,
        \begin{equation}\label{eq:cor:central.moments.negative.binomial.on.events.main}
            \begin{aligned}
                &\EE[(K - r p q^{-1}) \, \ind_{\{K\in A\}}] = \OO_p(r^{1/2}) (\PP(K\in A^c))^{1/2}, \\[0.5mm]
                &\EE[(K - r p q^{-1})^3 \, \ind_{\{K\in A\}}] = r p q^{-2} \cdot (1 + p) q^{-1} + \OO_p(r^{3/2}) \, (\PP(K\in A^c))^{1/2}.
            \end{aligned}
        \end{equation}
    \end{corollary}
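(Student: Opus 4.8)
The plan is to reduce both estimates to the unconditioned central moments computed in Lemma~\ref{lem:central.moments.negative.binomial} by splitting on the complementary event $\{K\in A^c\}$ and invoking the Cauchy--Schwarz inequality. The starting observation is the trivial identity $\ind_{\{K\in A\}} = 1 - \ind_{\{K\in A^c\}}$, which rewrites each conditional moment as a full moment minus a moment restricted to $A^c$; the latter is small precisely because $\PP(K\in A^c)$ is small, and Cauchy--Schwarz converts this into the advertised $(\PP(K\in A^c))^{1/2}$ factor.

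For the first identity, I would use that $r p q^{-1} = \EE[K]$, so that $\EE[(K - r p q^{-1})] = 0$ and hence
\[
\EE[(K - r p q^{-1}) \, \ind_{\{K\in A\}}] = - \EE[(K - r p q^{-1}) \, \ind_{\{K\in A^c\}}].
\]
Applying Cauchy--Schwarz to the right-hand side together with the variance formula $\EE[(K - r p q^{-1})^2] = r p q^{-2}$ from Lemma~\ref{lem:central.moments.negative.binomial} yields
\[
\big|\EE[(K - r p q^{-1}) \, \ind_{\{K\in A\}}]\big| \leq \sqrt{\EE[(K - r p q^{-1})^2]} \, \sqrt{\PP(K\in A^c)} = \OO_p(r^{1/2}) \, (\PP(K\in A^c))^{1/2}.
\]
For the second identity, the same splitting gives
\[
\EE[(K - r p q^{-1})^3 \, \ind_{\{K\in A\}}] = \EE[(K - r p q^{-1})^3] - \EE[(K - r p q^{-1})^3 \, \ind_{\{K\in A^c\}}],
\]
where the first term equals $r p q^{-2} \cdot (1+p) q^{-1}$ exactly, by Lemma~\ref{lem:central.moments.negative.binomial}. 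For the remainder, Cauchy--Schwarz combined with the sixth-moment estimate $\EE[(K - r p q^{-1})^6] = 15 (r p q^{-2})^3 + \OO_p(r^2) = \OO_p(r^3)$ bounds it by $\OO_p(r^{3/2}) \, (\PP(K\in A^c))^{1/2}$, which finishes the proof.

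As for obstacles: there is essentially none of substance — this is a routine second-moment argument. The only point requiring a moment's care is that the implied constants in the $\OO_p$ rates depend only on $p$ (through $q = 1 - p$) and are uniform over the Borel set $A$; this is immediate since the moment formulas in Lemma~\ref{lem:central.moments.negative.binomial} do not involve $A$ at all.
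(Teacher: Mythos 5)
Your proof is correct and matches the paper's own argument, which simply cites Lemma~\ref{lem:central.moments.negative.binomial} together with H\"older's inequality (of which your Cauchy--Schwarz step is the relevant special case). The splitting $\ind_{\{K\in A\}} = 1 - \ind_{\{K\in A^c\}}$, the use of $\EE[K] = r p q^{-1}$, and the second- and sixth-moment bounds are exactly the intended route.
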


    \begin{proof}[Proof of Corollary~\ref{cor:central.moments.negative.binomial.on.events}]
        This follows from Lemma~\ref{lem:central.moments.negative.binomial} and Holder's inequality.
    \end{proof}

\section{Short proof for the asymptotics of the median of a jittered Poisson random variable}\label{sec:jittered.Poisson.median}

    In this section, we present a short proof for the asymptotics of the median of a Poisson random variable jittered by a uniform (Theorem~\ref{thm:median.Poisson}), using the same technique introduced in Section~\ref{sec:median.jittered}.
    Our statement is slightly weaker than Theorem~1 in \cite{MR4135709}, but the proof is conceptually simpler.

    \begin{theorem}\label{thm:median.Poisson}
        Let $N_{\lambda}\sim \mathrm{Poisson}\hspace{0.2mm}(\lambda)$ and $U\sim \text{Uniform}\hspace{0.2mm}(0,1)$.
        Then, as $\lambda\to \infty$, we have
        \begin{equation}\label{eq:thm:median.Poisson}
            \mathrm{Median}\hspace{0.2mm}(N_{\lambda} + U) - \lambda = \frac{1}{3} + \OO(\lambda^{-1}).
        \end{equation}
    \end{theorem}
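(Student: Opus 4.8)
The plan is to mirror the argument used for Theorem~\ref{thm:median.negative.binomial}, which only required three ingredients: a continuity correction for the relevant discrete distribution, a first-order Taylor expansion of $\Phi$ at $0$, and the identity $\mathrm{Median}(N_\lambda+U)=t$ rewritten as $\int_0^1 \PP(N_\lambda\le t-u)\,\rd u=\tfrac12$. So the first step is to establish (or invoke) the Poisson analogue of Theorem~\ref{thm:main.result}: for $a$ in the bulk of the Poisson distribution, $\sum_{k=0}^a e^{-\lambda}\lambda^k/k! = \Phi(\eta_{a+1-c^\star(a+1)})+\OO(\lambda^{-3/2})$ for an appropriate correction $c^\star$, where $\eta_k \leqdef (k-\lambda)/\sqrt\lambda$ is the standardized variable. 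This correction can be derived exactly as in the proof of Theorem~\ref{thm:main.result} — expand $\log(e^{-\lambda}\lambda^k/k!)$ with Stirling, compare to $\lambda^{-1/2}\phi(\eta_k)$, identify the $\lambda^{-1/2}$ term (which will be $\tfrac16\eta_k^3-\tfrac12\eta_k$, the $p\to 0$ shadow of the negative binomial term), then sum over small intervals and solve for the $c$ that kills the leading error. The upshot is $c^\star(a)=\tfrac12+\tfrac16(\eta_{a-1/2}^2-1)+\OO(\lambda^{-1/2})$, and since we only need an $\OO(\lambda^{-1})$ final error, I can afford to keep just the $\tfrac12$ and $\tfrac16(\eta_{a-1/2}^2-1)$ pieces and absorb the $\lambda^{-1/2}$-order correction into the error bound.

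Next, write $t=\mathrm{Median}(N_\lambda+U)$ and, exactly as in \eqref{eq:thm:median.negative.binomial.beginning}, condition on $U$:
\begin{equation*}
\frac12 = \PP(N_\lambda\le\lfloor t\rfloor)\{t\} + \PP(N_\lambda\le\lfloor t\rfloor-1)(1-\{t\}) = \Phi(\eta_{\lfloor t\rfloor+1-c^\star(\lfloor t\rfloor+1)})\{t\}+\Phi(\eta_{\lfloor t\rfloor-c^\star(\lfloor t\rfloor)})(1-\{t\})+\OO(\lambda^{-3/2}).
\end{equation*}
Using $\Phi(x)=\tfrac12+x/\sqrt{2\pi}+\OO(x^3)$ and rearranging, the $\tfrac12$'s cancel and one obtains $t-\lambda = c^\star(\lfloor t\rfloor+1)\{t\}+c^\star(\lfloor t\rfloor)(1-\{t\})+\OO(\lambda^{-1})$. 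Now substitute the expression for $c^\star$. Since $t-\lambda$ is $\OO(1)$, we have $\eta_{\lfloor t\rfloor-1/2}=\OO(\lambda^{-1/2})$, so $\eta_{\lfloor t\rfloor-1/2}^2-1 = -1+\OO(\lambda^{-1})$, and likewise for the $\lfloor t\rfloor+1$ shift. Both convex-combination weights sum to $1$, so the right-hand side collapses to $\tfrac12+\tfrac16(-1)+\OO(\lambda^{-1}) = \tfrac13+\OO(\lambda^{-1})$, which is exactly \eqref{eq:thm:median.Poisson}.

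The main obstacle is really the bookkeeping in the first step: one must redo the Stirling/Taylor expansion from the proof of Lemma~\ref{lem:LLT.negative.binomial} in the Poisson case and then the interval-summation argument from the proof of Theorem~\ref{thm:main.result}, being careful that the bulk is defined with the correct scaling ($|\eta_k|\le\eta\,\lambda^{1/6}$ or similar) so that the error terms are genuinely $\OO(\lambda^{-3/2})$ and the large-deviation tail contributes only an exponentially small amount. Once that continuity correction is in hand — and it is strictly simpler than Theorem~\ref{thm:main.result} since there is no $p$ — the median computation is a two-line cancellation. The reason the statement here is weaker than \eqref{eq:asymptotics.jittered.Poisson.median} (which has $\oo(\lambda^{-1})$) is precisely that we discard the $\lambda^{-1/2}$-order part of $c^\star$ rather than tracking it through a second round of cancellation; keeping it would require the next term in the $\Phi$ expansion and a more delicate analysis of $\{t\}$, which is exactly the extra work that \cite{MR4135709} carries out and that we are deliberately avoiding for the sake of a short proof.
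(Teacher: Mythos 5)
Your proposal follows exactly the same route as the paper's proof: condition on $U$, insert a Poisson continuity correction, Taylor-expand $\Phi$ at $0$, exploit the telescoping $(\lfloor t\rfloor+1)\{t\}+\lfloor t\rfloor(1-\{t\})=t$, and then simplify $c^\star$ at the median location $\lfloor t\rfloor\asymp\lambda$ to arrive at $\tfrac12-\tfrac16$. The only genuine difference is that the paper does not re-derive the Poisson continuity correction; it simply cites Lemma~2.1 of \cite{MR4213687}, which already provides $\PP(N_\lambda\leq a)=\Phi(\eta_{a+1-c_\lambda^\star(a+1)})+\OO(\lambda^{-3/2})$ with $c_\lambda^\star(a)=\tfrac12+\tfrac16[\lambda^{-1}(a-\lambda)^2-1]+\OO(\lambda^{-1})$ for $a\asymp\lambda$. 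Your plan to redo the Stirling expansion and interval-summation argument in the Poisson case is sound and would indeed reproduce that lemma (and, as you note, it is the $p\to 0$ shadow of the negative-binomial computation), but it amounts to re-proving a result the paper treats as external input.

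One imprecision in your write-up is worth flagging. You say you will ``absorb the $\lambda^{-1/2}$-order correction into the error bound'' because ``we only need an $\OO(\lambda^{-1})$ final error,'' but this justification is off: a genuine $\OO(\lambda^{-1/2})$ omission in $c^\star$ would degrade the continuity correction to $\OO(\lambda^{-1})$ accuracy, and after multiplying through by $\sqrt{2\pi\lambda}$ in the last step, the final answer would only be accurate to $\OO(\lambda^{-1/2})$ — not $\OO(\lambda^{-1})$. What actually saves the argument (and what the paper's stated form $c_\lambda^\star(a)=\tfrac12+\tfrac16[\lambda^{-1}(a-\lambda)^2-1]+\OO(\lambda^{-1})$ for $a\asymp\lambda$ encodes) is that the omitted $\lambda^{-1/2}$-order piece of $c^\star$ is a polynomial in $\eta_{a-1/2}$ with \emph{no constant term}; at the median location $\eta_{a-1/2}=\OO(\lambda^{-1/2})$, so the omitted piece is in fact $\OO(\lambda^{-1})$, which perturbs the argument of $\Phi$ by only $\OO(\lambda^{-3/2})$ and leaves the $\OO(\lambda^{-3/2})$ continuity-correction error intact. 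Your displayed equation already uses the correct $\OO(\lambda^{-3/2})$ error, so the mathematics is right; only the prose explanation of the error budgeting is misleading.
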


    \begin{proof}
            By conditioning on $U$ and using the local limit theorem from Lemma~2.1 in \cite{MR4213687}, we want to find $t = \mathrm{Median}\hspace{0.2mm}(N_{\lambda} + U) > 0$ such that
            \begin{align}\label{eq:thm:median.negative.binomial.beginning.Poisson}
                \frac{1}{2}
                &= \int_0^1 \PP(N_{\lambda} \leq t - u) \, \rd u \notag \\[1mm]
                &= \PP(N_{\lambda} \leq \lfloor t \rfloor) \cdot \{t\} + \PP(N_{\lambda} \leq \lfloor t \rfloor - 1) \cdot (1 - \{t\}) \notag \\[1mm]
                &= \Phi(\delta_{\lfloor t \rfloor + 1 - c_{\lambda}^{\star}(\lfloor t \rfloor + 1)}) \cdot \{t\} + \Phi(\delta_{\lfloor t \rfloor - c_{\lambda}^{\star}(\lfloor t \rfloor)}) \cdot (1 - \{t\}) + \OO(\lambda^{-3/2}),
            \end{align}
            where $\{t\}$ denotes the fractional part of $t$, and
            \begin{equation}\label{eq:optimal.c.Poisson}
                c_{\lambda}^{\star}(a) = \frac{1}{2} + \frac{1}{6} \big[\lambda^{-1} (a - \lambda)^2 - 1\big] + \OO(\lambda^{-1}), \quad \text{for } a - \lambda \asymp 1.
            \end{equation}
            Now, we have the following Taylor series expansion for $\Phi$ at $0$:
            \begin{equation}
                \Phi(x) = \frac{1}{2} + \frac{x}{\sqrt{2\pi}} + \OO(x^3).
            \end{equation}
            Therefore, \eqref{eq:thm:median.negative.binomial.beginning.Poisson} becomes
            \begin{equation}
                0 = \frac{1}{\sqrt{2\pi}} \left[\delta_{\lfloor t \rfloor + 1 - c_{\lambda}^{\star}(\lfloor t \rfloor + 1)} \cdot \{t\} + \delta_{\lfloor t \rfloor - c_{\lambda}^{\star}(\lfloor t \rfloor)} \cdot (1 - \{t\})\right] + \OO(\lambda^{-3/2}).
            \end{equation}
            After rearranging some terms, this is equivalent to
            \begin{equation}
                 t - \lambda = c_{\lambda}^{\star}(\lfloor t \rfloor + 1) \cdot \{t\} + c_{\lambda}^{\star}(\lfloor t \rfloor) \cdot (1 - \{t\}) + \OO(\lambda^{-1}).
            \end{equation}
            By applying the expression for $c_{\lambda}^{\star}$ in \eqref{eq:optimal.c.Poisson}, this is $t - \lambda = \frac{1}{2} - \frac{1}{6} + \OO(\lambda^{-1})$.
        \end{proof}

    Let $N_1,N_2,\dots,N_n\sim \mathrm{Poisson}\hspace{0.2mm}(\lambda)$ and $U_1,U_2,\dots,U_n\sim \mathrm{Uniform}\hspace{0.2mm}(0,1)$ be i.i.d., and define $Z_i \leqdef N_i + U_i$ for all $i\in \{1,2,\dots,n\}$.
    Then, we have the convergence in probability of the sample median:
    \begin{equation}
        \widehat{\mathrm{Median}}(Z_1, Z_2, \dots, Z_n) \stackrel{\PP}{\longrightarrow} \mathrm{Median}\hspace{0.2mm}(Z_1), \quad \text{as } n\to \infty,
    \end{equation}
    see, e.g., \cite{MR1652247}, p.47.
    We deduce the following corollary.

    \begin{corollary}\label{cor:robust.estimator.Poisson}
        With the above notation,
        \begin{equation}\label{eq:cor:robust.estimator.Poisson}
            \widehat{\mathrm{Median}}(Z_1, Z_2, \dots, Z_n) - \lambda \stackrel{\PP}{\longrightarrow} \frac{1}{3},
        \end{equation}
        as $n\to \infty$ and then $\lambda\to \infty$.
    \end{corollary}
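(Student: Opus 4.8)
The plan is to combine the convergence in probability of the sample median, stated just before the corollary, with the asymptotic expansion of $\mathrm{Median}(N_{\lambda} + U)$ from Theorem~\ref{thm:median.Poisson}, in exactly the same spirit as the proof of Corollary~\ref{cor:robust.estimator} for the negative binomial case.

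First I would fix $\lambda > 0$. Since $Z_1, Z_2, \dots, Z_n$ are i.i.d.\ copies of $Z_1 = N_1 + U_1$, the cited standard asymptotic theory gives $\widehat{\mathrm{Median}}(Z_1, Z_2, \dots, Z_n) \stackrel{\PP}{\longrightarrow} \mathrm{Median}(Z_1)$ as $n\to\infty$. Subtracting the constant $\tfrac{1}{3}$ and invoking the continuous mapping theorem, this yields $\widehat{\mathrm{Median}}(Z_1, Z_2, \dots, Z_n) - \tfrac{1}{3} \stackrel{\PP}{\longrightarrow} \mathrm{Median}(Z_1) - \tfrac{1}{3}$ as $n\to\infty$. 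Next I would apply Theorem~\ref{thm:median.Poisson}, which gives $\mathrm{Median}(Z_1) - \tfrac{1}{3} = \mathrm{Median}(N_{\lambda} + U) - \tfrac{1}{3} = \lambda + \OO(\lambda^{-1})$, so that for each fixed $\lambda$ the statistic $\widehat{\mathrm{Median}}(Z_1, Z_2, \dots, Z_n) - \tfrac{1}{3}$ converges in probability, as $n\to\infty$, to a deterministic number lying within $\OO(\lambda^{-1})$ of $\lambda$.

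To conclude \eqref{eq:cor:robust.estimator.Poisson} I would then unwind the meaning of the iterated limit ``as $n\to\infty$ and then $\lambda\to\infty$''. Given $\e > 0$, there is $\lambda_0$ such that $|\mathrm{Median}(Z_1) - \tfrac{1}{3} - \lambda| < \e/2$ for all $\lambda \geq \lambda_0$; for such $\lambda$ the event $\{|\widehat{\mathrm{Median}}(Z_1,\dots,Z_n) - \tfrac{1}{3} - \lambda| > \e\}$ is contained in $\{|\widehat{\mathrm{Median}}(Z_1,\dots,Z_n) - \tfrac{1}{3} - (\mathrm{Median}(Z_1) - \tfrac{1}{3})| > \e/2\}$, whose probability tends to $0$ as $n\to\infty$ by the previous paragraph. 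Hence $\limsup_{\lambda\to\infty}\limsup_{n\to\infty}\PP(|\widehat{\mathrm{Median}}(Z_1,\dots,Z_n) - \tfrac{1}{3} - \lambda| > \e) = 0$, which is precisely the claimed convergence.

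There is no substantial obstacle: the statement is a direct corollary of Theorem~\ref{thm:median.Poisson} and the cited consistency of the sample median. The only point needing a little care is the two-stage-limit bookkeeping, namely checking that the $\OO(\lambda^{-1})$ remainder from Theorem~\ref{thm:median.Poisson} can be absorbed into an arbitrary $\e$ once $\lambda$ is taken large after $n$, which is immediate.
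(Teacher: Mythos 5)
Your argument is correct and is exactly what the paper intends: the corollary is deduced directly from the consistency of the sample median stated just above it together with Theorem~\ref{thm:median.Poisson}, mirroring the proof of Corollary~\ref{cor:robust.estimator}. The paper leaves the two-stage-limit bookkeeping implicit, but your spelling it out is the same route.
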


\end{appendices}

\section*{Supplemental material}

The \texttt{R} codes that generated Figures~\ref{fig:Figure.1.analogue.Coeurjolly},~\ref{fig:Figure.2.3.analogue.Coeurjolly}~and~\ref{fig:Figure.2.3.analogue.Coeurjolly.polluted} are available at \\ \href{https://www.dropbox.com/sh/6dahqrbtvgxazg7/AAA_SUramLoTC0Eg-HKIO74Ja?dl=0}{https://www.dropbox.com/sh/6dahqrbtvgxazg7/AAA\_SUramLoTC0Eg-HKIO74Ja?dl=0}.

\section*{Conflict of interest statement}

The author declares no conflict of interest.

\section*{Funding}

The author was previously supported by a postdoctoral fellowship from the NSERC (PDF) and the FRQNT (B3X supplement). The author is currently supported by a postdoctoral fellowship (CRM-Simons) from the Centre de recherches math\'ematiques (Universit\'e de Montr\'eal) and the Simons Foundation.

\section*{Acknowledgments}

We thank the referee for carefully reading the manuscript and for his/her helpful comments and suggestions which led to improvements in the writing of this paper.

%
%

\section*{References}
\phantomsection
\addcontentsline{toc}{chapter}{References}

\bibliographystyle{authordate1}
\bibliography{Ouimet_2023_negative_binomial_median_bib}

\end{document}